\newtheorem{theorem}{Theorem}[section]
\newtheorem*{theorem*}{Theorem}
\newtheorem{lemma}[theorem]{Lemma}
\newtheorem{conjecture}[theorem]{Conjecture}
\newtheorem{claim}[theorem]{Claim}
\newtheorem{corollary}[theorem]{Corollary}
\newtheorem*{example*}{Example}
\newtheorem{example}[theorem]{Example}
\newtheorem{question}[theorem]{Question}
\newtheorem{obs}[theorem]{Observation}
\newtheorem{construction}{Construction}
\newtheorem*{definition*}{Definition}
\DeclareMathOperator{\psr}{psr}
\DeclareMathOperator{\sat}{sat}
\setlist[enumerate]{topsep=0pt,partopsep=1ex,parsep=1ex}
\newcommand{\abs}[1]{\left\lvert{#1}\right\rvert}
\title{Saturated Partial Embeddings of Planar Graphs}
\author{Alexander Clifton\thanks{Discrete Mathematics Group, Institute for Basic Science, Daejeon, South Korea {\tt yoa@ibs.re.kr}}\and Nika Salia\thanks{King Fahd University of Petroleum and Minerals, Dhahran, Saudi Arabia {\tt salianika@gmail.com}}}
\begin{document}
\maketitle

\begin{abstract}
In this work, we study how far one can deviate from optimal behavior when embedding a planar graph. 
For a planar graph $G$, we say that a plane subgraph $H\subseteq G$ is a \textit{plane-saturated subgraph} if adding any edge (possibly with new vertices) to $H$ would either violate planarity or make the resulting graph no longer a subgraph of $G$. 
For a planar graph $G$, we define the \textit{plane-saturation ratio}, $\psr(G)$, as the minimum value of $\frac{e(H)}{e(G)}$ for a plane-saturated subgraph $H$ of $G$ and investigate how small $\psr(G)$ can be. 
While there exist planar graphs where $\psr(G)$ is arbitrarily close to $0$, we show that for all twin-free planar graphs, $\psr(G)>1/16$, and that there exist twin-free planar graphs where $\psr(G)$ is arbitrarily close to $1/16$. In fact, we study a broader category of planar graphs, focusing on classes characterized by a bounded number of degree $1$ and degree $2$ twin vertices. We offer solutions for some instances of bounds while positing conjectures for the remaining ones.
\end{abstract}

\section{Introduction}

One prominent class of problems in extremal combinatorics concerns the \emph{Turán number}~\cite{turan1941external}. Given an integer $n$ and a graph $H$, the aim is to determine the maximum number of edges, $ex(n,H)$, in a graph with $n$ vertices, containing no subgraph isomorphic to $H$ (refer to the survey by Füredi and Simonovits~\cite{furedi2013history}). At the other end of the spectrum is the \emph{saturation number} $\sat(n,H)$, denoting the minimum number of edges in a $n$-vertex graph where there is no subgraph isomorphic to $H$, yet adding a further edge introduces a copy of $H$. 
Zykov~\cite{zykov1949some} initiated the study of the saturation problem in 1949, while in 1964, Erdős, Hajnal, and Moon~\cite{erdos1964problem} determined the exact value for $\sat(n,K_t)$. 
Bollobás~\cite{bollobas1965generalized} extended this result to hypergraphs in 1965.  Kászonyi and Tuza~\cite{kaszonyi1986saturated} established a general upper bound for $\sat(n,H)$ in 1986, which was later improved by Faudree and Gould~\cite{FG13} in 2013. In 2022, Cameron and Puleo~\cite{CP22} gave the first general lower bound. Further developments are outlined in the survey by Faudree, Faudree, and Schmitt~\cite{faudree2011survey}.

In this paper, we study a variation of saturation problems concerning planar drawings of planar graphs. 
The inspiration behind this problem comes from the work of Kynčl, Pach, Radoičić, and Tóth~\cite{KPRT15}. 
They investigate simple topological graphs $G$, defined as graphs drawn in the plane such that any pair of edges shares at most one common point. 
A topological graph $G$ is deemed saturated if no additional edge can be added into $G$ without compromising its topological nature (see also \cites{pach2003disjoint,brass2005research,pach2003unavoidable,suk2012disjoint,fulek2013topological}).


Every \emph{planar graph} has a planar embedding, i.e. there exists an embedding on the plane such that no two edges intersect (or overlap) and no two vertices coincide.  
A planar graph with a given planar embedding is called a \emph{plane graph}.
In fact, if we are not judicious with how we embed the edges and vertices, we may reach a point where adding any additional edge of a planar graph to the drawing will cause a crossing.
To demonstrate this phenomenon, we present the following example.

\begin{example}\label{example0}
For an integer $n>5$, let $G_n$ be an $n$-vertex planar graph consisting of a triangle $v_1v_2v_3$, a vertex disjoint complete bipartite graph $K_{2,n-5}$ with one of the partite classes $\{u_1,u_2\}$, and edges $u_1v_1, u_2v_2$.

Let $H_n$ be an $n$-vertex plane graph, consisting of a triangle $w_1w_2w_3$, two vertices $w'$ and $w''$ in the interior of the triangle such that $w'$ is adjacent to $w_1$ and $w''$ is adjacent to $w_2$, and $n-5$ isolated vertices embedded in the exterior of the triangle. 

The plane graph $H_n$ is a subgraph of $G_n$ and it is apparent that the vertices $w',w''$ are representatives of $u_1,u_2$, since there is a unique triangle in $G_n$. Hence, adding any additional edge of $G_n$ to $H_n$ will cause a crossing. 

As the number of edges in $H_n$ is a constant, independent from $n$, the ratio $\frac{e(H_n)}{e(G_n)}$
tends to $0$ as $n$ tends to infinity.
\end{example}

In this paper, we study a natural question of how inefficient one can be while embedding a planar graph on the plane. To this end, we introduce some definitions necessary for a rigorous statement of the problem.

\begin{definition*}
A plane graph $H$ is a \textit{plane-saturated subgraph} of a planar graph $G$ if, when any edge (potentially introducing new vertices) is added to $H$, it either introduces a crossing or causes the resulting graph to no longer be a subgraph of $G$.



For a graph $G$, we define the \textit{plane-saturation ratio} $\psr(G)$ as the minimum value of $\frac{e(H)}{e(G)}$ over all plane-saturated subgraphs $H$ of $G$.
\end{definition*}

For example, the plane-saturation ratio of a tree or a cycle is one.

Note that to determine the plane-saturation ratio of a graph, it is enough to consider all plane-saturated subgraphs $H$ of $G$ with $v(H)=v(G)$, since adding isolated vertices to $H$ will not affect whether $H$ is a plane-saturated subgraph of $G$ or not. We observe that removing isolated vertices from $G$ does not affect $\psr(G)$, so we may also assume throughout the paper that $G$ has no isolated vertices.

The plane-saturation ratio can be thought of as a measure of how far one can deviate from optimal behavior when drawing a given planar graph on the plane while forbidding crossings. It is natural to ask for the lowest possible value of $\psr(G)$ for a planar graph $G$. 
However, 
if $G_n$ is the planar graph described in Example \ref{example0}, then $\psr(G_n)\leq \frac{5}{2n-5}$ and $\lim_{n\to\infty}\psr(G_n)=0$. 
The presence of arbitrarily many degree $2$ vertices with the same neighborhood in $G_n$  allows for $\psr(G)$ to be arbitrarily close to $0$. 
Similarly, removing $u_1$ and $w'$ from $G_n$ and $H_n$ respectively, along with their incident edges gives an example of a planar graph $G_{n-1}'$ and its plane-saturated subgraph demonstrating $\psr(G_{n-1}')\le \frac{4}{n-1}$. 
Thus it is possible for $\psr(G)$ to be arbitrarily close to $0$ due to the presence of arbitrarily many degree $1$ vertices with the same neighborhood. It is natural to wonder if $\psr(G)$ is bounded away from $0$ if we restrict to planar graphs that do not have arbitrarily many degree $1$ vertices or degree $2$ vertices with the same neighborhood.

 For constants $k_1$ and $k_2$, we study the infimum value of $\psr(G)$ for $G$ in the class of planar graphs with at most $k_1$ degree $1$ vertices with the same neighborhood and at most $k_2$ degree $2$ vertices with the same neighborhood. Of particular interest is the case $k_1=k_2=1$, meaning $G$ has no twins\footnote{A pair of vertices are called twins if they share the same neighborhood.} of degree $1$ or $2$.
 
 In the following example, we demonstrate a twin-free planar graph with one of its plane-saturated subgraphs. 

\begin{example}\label{ex:onefifth}
  Let $G_{2n+5}$ be a twin-free planar graph on $2n+5$ vertices consisting of a matching of size $n$, two vertices $u_1$ and $u_2$ adjacent to every vertex of the matching, and a triangle $v_1v_2v_3$, with additional edges $v_1u_1$ and $v_2u_2$; see the left graph in Figure~\ref{fig:partofH}. 
  
  Let $H_{2n+5}$ be the plane graph formed by $G_{7}$ and a matching of size $n-1$ embedded in the plane as in the right graph in Figure~\ref{fig:partofH}. 
    
    \begin{figure}[ht]
        \centering
 \begin{tikzpicture}
  \tikzstyle{vertex}=[circle,fill=black, inner sep=1.2pt]  
  \node[vertex] (u_1) at (2.35,3.3) {};
  \node[vertex] (u_2) at (2.35,1.2)   {};

  \node[vertex] (a_1) at (6,3.4) {};
  \node[vertex] (a_2) at (4.5,1.1)   {};
  \node[vertex] (a_3) at (7.3,1.1)  {};
  
  \node[vertex] (a_6a) at (1,2.25)   {};
  \node[vertex] (a_6b) at (1.5,2.25)  {};
  \node[vertex] (a_7a) at (3,2.25)   {};
  \node[vertex] (a_7b) at (3.5,2.25)  {};

  \node[vertex] (a_9a) at (4,2.25)   {};
  \node[vertex] (a_9b) at (4.5,2.25)  {};
  \node[vertex] (a_0a) at (0,2.25)   {};
  \node[vertex] (a_0b) at (0.5,2.25)  {};

    \draw (a_1) -- (a_2)--(a_3)--(a_1);
    \draw (u_1) -- (a_1);
    \draw (u_2) -- (a_2);
  
    \draw (a_6a) -- (a_6b);
    \draw (a_7a) -- (a_7b);
    \draw (2.25,2.25)node[align=center]{$\dots$};
    \draw (a_9a) -- (a_9b);
    \draw (a_0a) -- (a_0b);
    
    \draw (u_1) -- (a_6a);
    \draw (u_1) -- (a_7a);
    \draw (u_1) -- (a_0a);
    \draw (u_1) -- (a_9a);
    \draw (u_1) -- (a_6b);
    \draw (u_1) -- (a_7b);
    \draw (u_1) -- (a_0b);
    \draw (u_1) -- (a_9b);

    \draw (u_2) -- (a_6a);
    \draw (u_2) -- (a_7a);
    \draw (u_2) -- (a_0a);    
    \draw (u_2) -- (a_9a);
    \draw (u_2) -- (a_6b);
    \draw (u_2) -- (a_7b);
    \draw (u_2) -- (a_0b);
    \draw (u_2) -- (a_9b);

 \draw [dashed] (7.5,1.1) -- (7.5,4.3);

\end{tikzpicture}
\begin{tikzpicture} 
  \tikzstyle{vertex}=[circle,fill=black, inner sep=1.2pt]
  \node[vertex] (a_1) at (6,3.4) {};
  \node[vertex] (a_2) at (4.5,1.1)   {};
  \node[vertex] (a_3) at (7.3,1.1)  {};
  \node[vertex] (a_4) at (6.5,1.75) {};
  \node[vertex] (a_5) at (5.5,1.75) {};
    \node[vertex] (a_6a) at (8,2.9)   {};
  \node[vertex] (a_6b) at (8,2)  {};
  \node[vertex] (a_7a) at (8.5,2.9)  {};
  \node[vertex] (a_7b) at (8.5,2)  {};
  \node[vertex] (a_10a) at (9,2.9)  {};
  \node[vertex] (a_10b) at (9,2)  {};
  \node[vertex] (a_11a) at (11,2.9)  {};
  \node[vertex] (a_11b) at (11,2)  {};
  \node[vertex] (a_12a) at (11.5,2.9)  {};
  \node[vertex] (a_12b) at (11.5,2)  {};
  \node[vertex] (a_8a) at (6,1.5){};
  \node[vertex] (a_8b) at (6,2){};
  \draw (a_1) -- (a_2)--(a_3)--(a_1);
  \draw (a_3)--(a_4);
  \draw (a_2)--(a_5);
  \draw (a_4)--(a_8a)--(a_5);
  \draw(a_4)--(a_8b)--(a_5);
  \draw (a_6a)--(a_6b);
  \draw (a_7a)--(a_7b);
  \draw (a_8a)--(a_8b);
  \draw (a_11a)--(a_11b);
  \draw (a_10a)--(a_10b);
  \draw (a_12a)--(a_12b);
\draw (10,2.5)node[align=center]{$\dots$};
\end{tikzpicture}

        \caption{On the left, graph $G_{2n+5}$ and on the right, graph $H_{2n+5}$ from Example \ref{ex:onefifth}.}
        \label{fig:partofH}
    \end{figure}
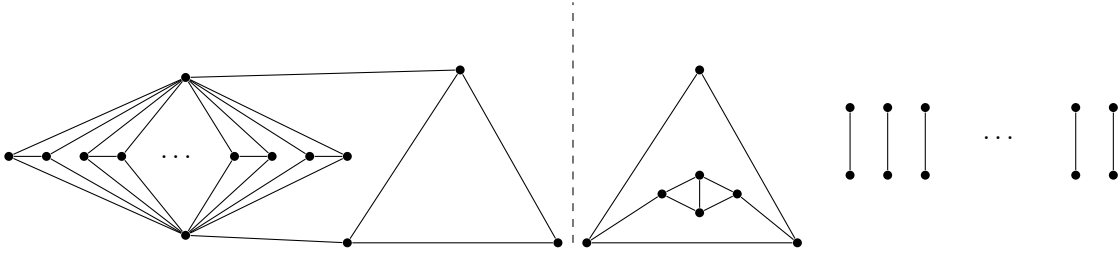
    First, we show that $H_{2n+5}$ is a plane-saturated subgraph of $G_{2n+5}$.
    It is easy to see that $H_{2n+5}$ is a subgraph of $G_{2n+5}$. Fix an embedding of $H_{2n+5}$ in $G$. 
    For $i\in \{1,2\}$, consider the induced graph $G[V(G_{2n+5})-\{u_i\}]$; it contains no two triangles sharing an edge. 
    Furthermore, every two triangles that share an edge $ab$ in $G_{2n+5}$ are $u_1ab$ and $u_2ab$. 
    As $H_{2n+5}$ contains two triangles sharing an edge, the pre-images of the vertices $u_1$ and $u_2$ are well defined up to swapping in $H_{2n+5}$.
    Moreover, $H_{2n+5}$ contains a triangle not incident to the pre-images of $u_1$ and $u_2$; such a triangle is also unique in $G_{2n+5}$. 
    Namely, it is  $v_1v_2v_3$. 
    Thus, the set of pre-images of $v_1,v_2,v_3$ in $H_{2n+5}$ is well defined. 
    Every other vertex in $G_{2n+5}$ has degree $3$ and two of its neighbors are $u_1$ and $u_2$. 
    However, since pre-images of the vertices $u_1$ and $u_2$ are separated from the remaining vertices in $H_{2n+5}$, no further edge may be added to $H_{2n+5}$ without compromising planarity.   
    

     Thus, we have 
     \[
     \lim_{n\to\infty}\psr(G_{2n+5})\le \lim_{n\to\infty}\frac{e(H_{2n+5})}{e(G_{2n+5})} =  \lim_{n\to\infty}\frac{n+9}{5n+5}=\frac{1}{5}.
     \]   
\end{example}

While it is reasonable to conjecture that as $n$ tends to infinity, the graph $G_{2n+5}$ from Example~\ref{ex:onefifth} achieves the minimum plane-saturation ratio for twin-free planar graphs, there exist such graphs with smaller plane-saturation ratio.

\begin{theorem}\label{thm:Twin-free}
    For every planar graph $G$ with no degree $1$ or degree $2$ twins,
    \[
        \psr(G)>\frac{1}{16}.
    \]
    For every positive $\epsilon$, there exists a twin-free planar $G_{\varepsilon}$ such that 
    \[
   \psr(G_{\epsilon})<\frac{1}{16}+\epsilon.
    \]
\end{theorem}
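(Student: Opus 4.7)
The plan is to prove the lower and upper bounds separately. For the lower bound, fix a plane-saturated subgraph $H \subseteq G$ with $V(H) = V(G) = n$ (as noted, isolated vertices may be added without affecting saturation). With the embedding of $H$ fixed, the plane-saturation condition becomes purely combinatorial: for every edge $uv \in E(G) \setminus E(H)$, the vertices $u$ and $v$ share no common face in $H$'s embedding, since otherwise $uv$ could be drawn inside that face without crossing. The target is $e(G) < 16 \cdot e(H)$, which I would establish via a charging argument assigning each edge of $E(G) \setminus E(H)$ to combinatorial elements of $H$ in such a way that each edge of $H$ receives fewer than $15$ charges.

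To carry out the charging, I would classify vertices of $V(H)$ by their degree in $H$ (isolated, leaves, and higher-degree) and bound the contribution of each class separately. Euler's formula relates the number of faces of $H$ to $n$ and $e(H)$, and summing $|V(\phi)|$ over faces $\phi$ bounds the total vertex-face incidences by $O(e(H))$. The twin-free hypothesis is essential here: since degree-$1$ and degree-$2$ vertices of $G$ cannot have twins, the number of such vertices accessible through any single face of $H$ is strictly controlled, preventing too many hidden edges from concentrating near any one edge of $H$. Combining these bounds across all vertex classes and face types gives the desired inequality, with strict inequality holding under twin-freeness because equality would force a uniform, highly symmetric gadget structure producing twin configurations forbidden by the hypothesis.

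For the upper bound, I would construct $G_\epsilon$ as a generalization of the graph in Example~\ref{ex:onefifth}. Take a constant-size \emph{core} $C$ carrying a unique identifier (such as a distinguished triangle or a triangulated region, as in Example~\ref{ex:onefifth}) that fixes the embedding of $C$ in any plane-saturated subgraph containing it. Attach many \emph{gadgets} to $C$, each designed so that it contributes exactly $16$ edges to $E(G)$ with only a single edge appearing in $E(H)$, while the other $15$ edges are blocked by having their endpoints forced into disjoint faces of the embedding. To preserve twin-freeness, the gadgets vary slightly in their attachment patterns so that no two degree-$1$ or degree-$2$ vertices share a common neighborhood; since the twin-free condition only restricts low-degree vertices, the internal high-degree vertices of the gadgets have flexibility to be nearly identical. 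As the number of gadgets grows, the ratio $e(H)/e(G)$ tends to $1/16$, giving $\psr(G_\epsilon) < 1/16 + \epsilon$ for sufficiently many gadgets.

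The main technical obstacle is obtaining the precise constant $1/16$ on both sides. In the lower bound, the charging argument must yield exactly the factor $15$ (rather than a cruder constant), which will require a delicate case analysis accounting for the degrees of vertices in $H$, the local topology of faces, and the precise way in which the twin-free condition restricts configurations of low-degree $G$-vertices sharing a face of $H$. On the upper bound side, the gadget must match this constant in the limit while preserving planarity and twin-freeness when many copies are attached to a shared core, which is nontrivial since planarity of the whole restricts how many gadgets can share attachment points. Matching the constants on both sides at $1/16$ is the delicate interplay at the heart of the theorem.
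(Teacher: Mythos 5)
Your lower-bound plan has a genuine gap. You reduce plane-saturation, for a fixed embedding $\phi$ of $H$ into $G$, to the condition that no two face-sharing vertices of $H$ have $G$-adjacent images. That condition is a correct necessary consequence, but it is far too weak to yield $e(G)<16\,e(H)$: it says nothing about edges of $G$ between images of the isolated vertices of $H$ and skeleton vertices that do \emph{not} lie on the face containing those isolated vertices, and these edges are the dominant contribution to $e(G)$ (this is exactly what happens in Example~\ref{example0}). The paper's proof controls them by exploiting the freedom to \emph{re-embed} $H$ in $G$: one swaps isolated vertices with leaves of tree components or with endpoints of matching edges to produce a new embedding in which two isolated vertices in a common face land on adjacent vertices of $G$, contradicting saturation (Lemmas~\ref{lem:noedge}, \ref{no2separateNeighbors}, \ref{lem:matching_cherry_odd_neighbouts_in_I}, Claim~\ref{RlikeJ2}, and the two-phase modification of $\phi$). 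These re-embedding lemmas force, e.g., that a leaf and its neighbor can have at most one common neighbor among the images of isolated vertices and nothing more, and they drive the classification of skeleton components (types M1--M4, T1--T3, S1--S6) and the counts of $J_1$, $J_2$, $J_{\ge 3}$, where the no-twins hypothesis enters via an auxiliary planar (multi)graph argument. Even then, a single global count does not reach $16$; the paper needs two different edge counts combined in a case split ($4r_3\ge r_2$ versus $r_2\ge 2r_3$). Your charging scheme, as described, has no mechanism replacing any of this, and the assertion that twin-freeness ``strictly controls'' the vertices accessible through a face is precisely the part that requires these ideas; a charging argument based only on the fixed-embedding face condition will not reach the constant $16$.

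The upper-bound sketch is directionally reasonable but incomplete: you posit a gadget contributing $16$ edges of $G$ per edge of $H$ without constructing it, and the actual difficulty lies there. The paper's construction is a double wheel ($C_m$ plus two apexes) decorated with one degree-$3$ vertex per face, one degree-$2$ vertex per edge, and one pendant per vertex, together with a disjoint $K_4$; the saturated subgraph keeps only the $C_m$, the $K_4$, and two stars $K_{1,14}$ representing the apexes, with all decorating vertices trapped as isolated vertices in a separate face of the $K_4$. The ratio $1/16$ is achieved precisely because the $3m$ wheel edges are \emph{not} kept in $H$ (the apexes are represented by constant-size stars away from the cycle), and verifying saturation requires an identification argument (the apexes are the only vertices of degree at least $14$, the $C_m$ is the unique $m$-cycle in what remains, the $K_4$ is forced) showing no alternative embedding permits an added edge. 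Your proposal does not supply either the gadget or this verification, so as it stands neither half of the theorem is established.
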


\begin{corollary}\label{cor:actualtwinfree}
    Any twin-free planar graph $G$ satisfies $\psr(G)>1/16$.
\end{corollary}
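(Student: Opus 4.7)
The plan is to observe that this corollary is an immediate specialization of Theorem~\ref{thm:Twin-free}. A graph is twin-free precisely when it contains no pair of vertices sharing the same neighborhood, regardless of their degree. In particular, a twin-free planar graph has no pair of degree $1$ twins and no pair of degree $2$ twins, so it falls into the class of planar graphs covered by the first part of Theorem~\ref{thm:Twin-free}.

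Concretely, I would phrase the proof in one sentence: let $G$ be a twin-free planar graph; then $G$ has no degree $1$ twins and no degree $2$ twins, so applying the lower bound in Theorem~\ref{thm:Twin-free} yields $\psr(G)>1/16$.

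There is essentially no obstacle here, because the hypothesis of the corollary is strictly stronger than the hypothesis of the theorem. The only subtlety worth flagging explicitly, for the reader's benefit, is that ``twin-free'' in the corollary is the global notion (no twins of any degree), while the theorem only needs the restricted hypothesis on degree $1$ and degree $2$ twins; this makes the implication trivial and does not require revisiting any of the quantitative estimates used to prove Theorem~\ref{thm:Twin-free}.
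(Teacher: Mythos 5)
Your proposal is correct and matches the paper's (implicit) reasoning exactly: the corollary is stated as an immediate consequence of Theorem~\ref{thm:Twin-free}, since a twin-free planar graph in particular has no degree $1$ or degree $2$ twins. Nothing further is needed.
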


In this work, we prove the following more general theorem which directly implies $\psr(G)>\frac{1}{16}$ in the first part of Theorem~\ref{thm:Twin-free}. 
Let $\mathcal{G}_{k_1,k_2}$ denote the class of planar graphs where at most $k_1$ vertices of degree $1$ have the same neighborhood and at most $k_2$ vertices of degree $2$ have the same neighborhood.

\begin{theorem}\label{thm:mostK}
    For a positive integer $k_1$ and a non-negative integer $k_2$ with $(k_1,k_2)\ne (1,0), (2,0)$,
    every planar graph $G\in\mathcal{G}_{k_1,k_2}$satisfies
    \[
    \psr(G)>\frac{1}{9+k_1+6k_2}.
    \]
    Furthermore, for all non-negative integers $k_1$ and $k_2$ and every positive $\epsilon$, there exists a graph $G_{\varepsilon}\in \mathcal{G}_{k_1,k_2}$ such that
    \[
   \psr(G_{\epsilon})<\frac{1}{9+k_1+6k_2}+\epsilon.
    \]
\end{theorem}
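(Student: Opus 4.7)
Let $H$ be a plane-saturated subgraph of $G\in\mathcal{G}_{k_1,k_2}$ with $V(H)=V(G)$ and $G$ having no isolated vertices; write $n=|V(G)|$, $m=e(H)$, $M=e(G)$. The central consequence of plane-saturation I would exploit is: for every face $F$ of $H$, if $V_F$ denotes the set of vertices on the boundary of $F$ together with the vertices drawn in its interior, then every $G$-edge with both endpoints in $V_F$ is already in $H$; equivalently, every $G$-neighbor of a vertex $v$ beyond its $H$-neighbors must lie outside every face of $H$ incident to $v$. The approach is to bound $M$ linearly in $m$ using $M\le 3n-6$ and then control $n$ by $m$. Partitioning $V(G)$ by $H$-degree into $V_{\ge 3}\sqcup V_2\sqcup V_1\sqcup V_0$ and applying the handshake inequality $3|V_{\ge 3}|+2|V_2|+|V_1|\le 2m$ yields $n\le(2m+|V_2|+2|V_1|+3|V_0|)/3$, so $M\le 2m+|V_2|+2|V_1|+3|V_0|-6$; hence the task reduces to proving
\[
|V_2|+2|V_1|+3|V_0|\;\le\;(7+k_1+6k_2)\,m.
\]

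To establish this, I would analyze each vertex $v$ with $d_H(v)\le 2$ via two cases. \textit{Case A:} $d_G(v)=d_H(v)$, so $v$ is a $G$-twin of every other vertex attached by $H$-edges to the same neighborhood, and the $\mathcal{G}_{k_1,k_2}$ hypothesis bounds the number of such $v$ by $k_1$ per pendant support or $k_2$ per degree-$2$ support pair, each in turn $O(m)$ via the $H$-edges they sponsor. \textit{Case B:} $d_G(v)>d_H(v)$, so $v$ has a $G$-neighbor outside every face of $H$ incident to $v$; then I would charge $v$ against an $H$-edge on the boundary of its face and bound the charge per $H$-edge by a planarity argument inside the neighboring faces. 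The coefficient $6k_2$ should emerge because a degree-$2$ vertex of $H$ lies on up to two faces, each contributing a factor of three from a planar count inside it, while $k_1$ comes from the single face at each pendant. Isolated vertices $V_0$ are handled analogously, using that two isolated vertices of $H$ in the same face would be $V_F$-visible and hence $G$-nonadjacent, which distributes them across faces. Combining these contributions yields $M\le(9+k_1+6k_2)m-6$, whence $\psr(G)\ge m/M>1/(9+k_1+6k_2)$.

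The main obstacle I anticipate is Case B: such vertices escape the twin hypothesis and must be controlled purely by the planar embedding of $G$ together with the face structure of $H$. I expect this to require a careful discharging scheme on faces of $H$, in which the extra $G$-neighbors of low-$H$-degree vertices are distributed along face boundaries and the total charge per face is bounded using that all pairs of vertices sharing a face of $H$ are forced to be $H$-adjacent (otherwise $H$ fails to be saturated).

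For the matching construction in the second half of Theorem~\ref{thm:mostK}, I would take $G_\epsilon$ as the union of $N$ copies of an atomic gadget contributing $9+k_1+6k_2$ edges to $G$ and a single edge to its plane-saturated subgraph $H_\epsilon$, equipped with a rigid triangular anchor (as in Example~\ref{ex:onefifth}) together with $k_1$ leaf twins and $k_2$ degree-$2$ twins attached at fixed points so that the overall graph remains in $\mathcal{G}_{k_1,k_2}$. As $N\to\infty$ the ratio $e(H_\epsilon)/e(G_\epsilon)$ tends to $1/(9+k_1+6k_2)$; plane-saturation of $H_\epsilon$ follows from an anchor-rigidity argument paralleling Example~\ref{ex:onefifth}, whereby the anchor pins down the embedding of $H_\epsilon$ and every additional $G$-edge must cross an $H$-edge.
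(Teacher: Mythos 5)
Your reduction collapses at the very first step, before any of the case analysis begins. From $e(G)\le 3v(G)-6$ and the handshake bound you reduce the theorem to the inequality $|V_2|+2|V_1|+3|V_0|\le(7+k_1+6k_2)\,e(H)$, but this inequality is simply false, and the paper's own extremal example (Construction~\ref{generalconstruction}; already Construction~\ref{decorateddoublewheel} for $k_1=k_2=1$) witnesses this. There $H$ consists of a $C_\mu$, a $K_4$, two stars, and roughly $(2+k_1+3k_2)\mu$ isolated vertices, so $e(H)=\mu+O(1)$, $|V_2|\approx\mu$, $2|V_1|=O(1)$, and $3|V_0|\approx(6+3k_1+9k_2)\mu$; hence $|V_2|+2|V_1|+3|V_0|\approx(7+3k_1+9k_2)\,e(H)$, which exceeds $(7+k_1+6k_2)\,e(H)$ whenever $2k_1+3k_2>0$, i.e.\ for every pair in the range of the lower bound. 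The root cause is that $e(G)\le 3v(G)-6$ is far too lossy here: membership in $\mathcal{G}_{k_1,k_2}$ only limits \emph{twins} of degree $1$ and $2$, so $G$ may (and in the extremal examples does) contain linearly many degree-$1$ and degree-$2$ vertices, making $e(G)$ much smaller than $3v(G)-6$; that slack is precisely what must be exploited and your scheme discards it at the outset. The paper never invokes a global $3v(G)-6$ bound; instead it fixes a carefully modified embedding $\phi$ of $H$ in $G$ and bounds separately the $G$-edges inside the image of the skeleton and the $G$-edges incident to images of isolated vertices, the latter stratified by degree ($J_1$, $J_2$, $J_{\ge 3}$) and controlled via the twin hypotheses together with planarity of auxiliary (bipartite or multi-) graphs, plus a case split ($4r_3\ge r_2$ versus $r_2\ge 2r_3$, suitably generalized) with no analogue in your plan. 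Since your intermediate inequality is false, no discharging scheme in Cases A/B can complete the argument as structured.

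Two further problems are worth flagging. First, your ``central consequence'' identifies $V(H)$ with $V(G)$, but ``subgraph of $G$'' in the definition is up to an embedding; the correct statement is relative to a chosen $\phi$ (two vertices sharing a face of $H$ need not be $H$-adjacent when their images are $G$-nonadjacent, so your later assertion that all pairs sharing a face are forced to be $H$-adjacent is false — the construction places many pairwise non-adjacent isolated vertices in one face). The freedom to re-embed is not a technicality to suppress but the paper's main engine (the two-phase re-embedding, Lemmas~\ref{lem:noedge} and~\ref{no2separateNeighbors}, Claim~\ref{RlikeJ2}). Second, the upper-bound sketch of ``$N$ disjoint copies of a gadget contributing $9+k_1+6k_2$ edges to $G$ and one edge to $H$'' is unsubstantiated: with many isomorphic components, identifiability of the drawn pieces breaks down (an isolated vertex in a face can represent a vertex of a different copy, enabling new edges), which is exactly why the paper uses a single decorated double wheel whose two high-degree apexes and unique long cycle pin down the embedding, and attaches $k_1$ pendant and $k_2$ degree-$2$ decorations per vertex/edge to reach the ratio $\frac{1}{9+k_1+6k_2}$.
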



We follow standard notations;  for a graph $G$, the set of its vertices is denoted by $V(G)$ and the set of its edges by $E(G)$. The number of vertices and edges are denoted by $v(G)$ and $e(G)$, respectively. If $G$ is a planar graph, then $f(G)$ denotes the number of faces.
For a vertex $v$ of $G$, the neighborhood of $v$ is the set of all vertices $u$ such that $vu$ is an edge of $G$ and is denoted by $N_G(v)$. 
The degree of the vertex $v$ is the size of $N_G(v)$ and is denoted by $d_G(v)$.
When the host graph is apparent from the context, we may use $N(v)$ and $d(v)$ instead of $N_G(v)$ and $d_G(v)$, respectively.  
When $G$ is a graph and $e$ is a pair of its nonadjacent vertices, then 
$G+e$ denotes the graph obtained from $G$ by adding the edge $e$.
When $U$ is a subset of vertices of $G$, then $G[U]$ denotes the induced subgraph of $G$ with vertex set $U$ and we denote graph $G[V(G)- U]$ by $G-U$. The cycle graph with $n$ vertices is denoted by $C_n$.

We introduce some notions necessary for the rest of the paper. 
Let $G$ be a planar graph and $H$ be a plane-saturated subgraph of $G$. 
Let $I$ denote the set of isolated vertices of $H$. 
We refer to the graph $H-I$ as the \textit{skeleton} and consequently the set of vertices $V(H)- I$ as \textit{skeleton vertices}. 
In particular, the skeleton is a planar graph without a specified embedding. 
We call a skeleton component with exactly two vertices a \textit{matching edge}. We will sometimes refer to an \textit{embedding} $\phi:V(H)\rightarrow V(G)$ of $H$ in $G$ where $\phi$ is a bijective function such that for every edge $uv$ of $H$, $\phi(u)\phi(v)$ is an edge of $G$. With a slight abuse of notation, for every edge $uv$ of $H$ we denote $\phi(uv):=\phi(u)\phi(v)$.

A strategic choice was made to prioritize the proof of Theorem~\ref{thm:Twin-free} as an initial step, even though the first part can be viewed as a special case of Theorem~\ref{thm:mostK}. 
This decision is based on the technicality involved in the proof process, coupled with our belief that Theorem~\ref{thm:Twin-free} represents the most foundational and natural case within the broader context of Theorem~\ref{thm:mostK}. 
The rest of the paper has the following structure.
Section \ref{section:twinfree} is dedicated to the proof of Theorem~\ref{thm:Twin-free}. 
In Subsection \ref{sub:upperbound}, we construct an infinite family of twin-free planar graphs that obtain plane-saturation ratios arbitrarily close to $\frac{1}{16}$. 
In Subsections \ref{sub:structure} and \ref{sub:counts}, we make some observations about the structure of a planar graph $G$ which has a plane-saturated subgraph $H$ with a given skeleton and use this to bound the number of edges of $G$ and $H$ in terms of the numbers of each type of component in the skeleton of $H$. 
In Subsections \ref{sub:proof1} and \ref{sub:proof2}, we complete the proof by considering two cases and showing $\frac{e(H)}{e(G)}>1/16$ for each. 
In Section \ref{section:mostK}, we indicate how the proof of Theorem~\ref{thm:Twin-free} can be modified to give a proof of Theorem~\ref{thm:mostK}. 
Finally in Section \ref{section:conc}, we consider some related open questions.

\section{Proof of Theorem~\ref{thm:Twin-free}}\label{section:twinfree}
\subsection{Construction of a twin-free graph achieving the desired upper bound}\label{sub:upperbound}

In this section, we present a family of twin-free planar graphs for which the plane-saturation ratio $\psr(G)$ tends to $\frac{1}{16}$, settling the second part of Theorem~\ref{thm:Twin-free}.

\begin{construction}\label{decorateddoublewheel}
Let $m\geq{7}$ and $G_0$ be a graph on $m+2$ vertices consisting of a $C_m$ and two additional vertices which are each adjacent to every vertex of the $C_m$.
There exists a unique planar embedding of $G_0$. 
Note that $G_0$ has $m+2$ vertices, $3m$ edges and $2m$ faces.

Let $G_1$ be the planar graph obtained from $G_0$ in the following way. 
For each face $f$ of $G_0$, we add a vertex $u_f$, of degree $3$, adjacent to the three boundary vertices of $f$. 
For each edge $e$ of $G_0$, we add a vertex $u_e$, of degree $2$ which is adjacent to the two endpoints of $e$. 
For each vertex $v$ of $G_0$, we add a vertex $u_v$, of degree $1$, which is adjacent to $v$.

Let $G_{3/m}$ be the disjoint union of $G_1$ and $K_4$. Note that $G_{3/m}$ is planar and free of degree $1$ and degree $2$ twins. Furthermore, since every vertex of $G_0$ has a pendant edge in $G_{3/m}$ and every additional vertex has a different set of neighbors,  $G_{3/m}$ is in fact twin-free. 
 Thus by the following Claim~\ref{ubworks}, we have that for every positive $\epsilon$, there exists a twin-free planar $G_{\varepsilon}$ such that 
    \[
    \psr(G_{\epsilon})<\frac{1}{16}+\epsilon,
    \]
and we have completed the proof of the second part of Theorem~\ref{thm:Twin-free}.

\end{construction}

\begin{claim}\label{ubworks}
  Let $G_{3/m}$ be as it is defined in Construction \ref{decorateddoublewheel}. Then
  \[
  \psr(G_{3/m})< \frac{1}{16}+\frac{3}{m}.
  \]
\end{claim}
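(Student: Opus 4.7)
The plan is to exhibit an explicit plane-saturated subgraph $H\subseteq G_{3/m}$ with exactly $m+6$ edges, whence the bound follows by direct computation. The construction rests on a single structural observation: the disjoint $K_4$-component of $G_{3/m}$ can be used as a \emph{topological barrier} whose three inner triangular faces serve as mutually inaccessible rooms into which we can deposit all the problematic high-degree and decoration vertices of $G_1$.

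First I would define $H$ as the spanning subgraph of $G_{3/m}$ with $V(H)=V(G_{3/m})$ and $E(H)$ equal to the six edges of the disjoint $K_4$-component together with the $m$ edges of the cycle $C_m\subseteq G_0$. Every other vertex---the two apexes $a_1,a_2$ and each decoration vertex $u_f,u_e,u_v$---is then isolated in $H$. Next I would specify a plane embedding: draw the $K_4$ so that it has three inner triangular faces $T_1,T_2,T_3$; draw $C_m$ as a disjoint simple cycle in the outer face of the $K_4$ (not enclosing it); then place $a_1$ in the interior of $T_1$, place $a_2$ in the interior of $T_2$, and park every $u$-vertex in the interior of $T_3$.

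The main step is verifying that $H$ is plane-saturated. Since the $K_4$-edges and the cycle edges are already in $H$, every missing edge of $G_{3/m}$ is some edge of $G_1\setminus C_m$, hence either an apex-cycle edge $a_k v_i$ or an incidence edge of the form $u_f w$, $u_e w$, or $u_v v$. In each case exactly one endpoint lies strictly inside one of $T_1,T_2,T_3$, while the other endpoint is either on $C_m$ (outside the $K_4$) or inside a \emph{different} inner $K_4$-face; any plane drawing of such an edge must leave its $T_i$-room and therefore cross a $K_4$-edge. It remains to exclude edges drawn entirely within $T_3$ among two $u$-vertices, but this is immediate from the construction of $G_1$: every edge of $G_1$ introduced by the decoration step is incident to a $G_0$-vertex, and no $G_0$-vertex is placed inside $T_3$, so no two $u$-vertices are adjacent in $G_{3/m}$.

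Finally, the numerical verification is a quick computation:
\[
\frac{e(H)}{e(G_{3/m})} = \frac{m+6}{16m+8} = \frac{1}{16}+\frac{88}{256m+128} < \frac{1}{16}+\frac{3}{m},
\]
where the last inequality rearranges to $88m<3(256m+128)=768m+384$, which holds for every positive integer $m$. The hardest conceptual point, and the main obstacle, is to realize that the role of the disjoint $K_4$ here is not to pin down the embedding by shared vertices (it shares none with $G_1$) but to provide three disjoint bounded faces whose boundaries consist solely of $K_4$-edges; this allows us to isolate the remaining $\Theta(m)$ vertices of $G_1$ into three separate regions with essentially zero additional edges in $H$.
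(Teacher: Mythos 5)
There is a genuine gap: your $H$ is not plane-saturated, because you are implicitly treating the isolated vertices of $H$ as labelled by specific vertices of $G_{3/m}$. In the definition used in this paper, $H$ is only required to be a subgraph of $G$ up to an embedding $\phi:V(H)\to V(G)$, and $H$ is plane-saturated only if \emph{no} embedding of the augmented graph into $G$ exists after adding an edge without a crossing. Your "rooms" argument fixes one particular identification (the apexes in $T_1,T_2$, the decoration vertices in $T_3$) and checks that those particular pairs are nonadjacent in $G_{3/m}$; but the isolated vertices carry no identity. Concretely, take two isolated vertices $w_1,w_2$ both placed inside $T_3$ and add the edge $w_1w_2$ drawn inside $T_3$: there is no crossing, and the resulting abstract graph is the disjoint union $C_m\sqcup K_4\sqcup K_2$ plus isolated vertices, which certainly embeds in $G_{3/m}$ (map $C_m$ to the rim of $G_0$, $K_4$ to the disjoint $K_4$, and the new $K_2$ to, say, the pendant edge at one of the apex vertices, all vertex-disjoint). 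Hence your $H$ admits a legal augmentation and is not plane-saturated, so the bound $e(H)=m+6$ cannot be used.

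This is exactly the obstacle the paper's construction is designed to overcome: its subgraph additionally contains two copies of $K_{1,14}$, whose centers are forced by a degree count (only the two apexes of $G_0$ have degree at least $14$ in $G_{3/m}$) to map onto the apexes in \emph{every} embedding; from this one deduces that the $C_m$ of $H$ must map to the unique $C_m$ avoiding the apexes and the $K_4$ to the disjoint $K_4$, after which every edge of $G_{3/m}$ not yet used is incident to an already-occupied vertex of $G_0$, so the remaining isolated vertices map to an independent set and no crossing-free edge can be added. Without such a gadget pinning down the high-degree vertices, any plane subgraph whose non-trivial part is just $C_m\sqcup K_4$ leaves too much freedom in the choice of embedding, and the saturation claim fails. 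If you want to repair your approach you would need to add components that rigidify the embedding (as the paper does), at the cost of raising $e(H)$ from $m+6$ to roughly $m+34$, which still yields the stated inequality.
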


\begin{proof}

The  number of vertices of $G_{3/m}$ is
\[
v(G_{3/m})=(m+2)+2m+3m+(m+2)+4=7m+8.
\]
 The number of edges in $G_{3/m}$ is 
\[
   e(G_{3/m})= \left(3m+3(2m)+2(3m)+(m+2)\right)+6=16m+8.
\]

Consider a planar graph $H_0$ consisting of the disjoint union of $C_m$, $K_4$, two copies of $K_{1,14}$, and $6m-26$ isolated vertices. 
The number of vertices of $H_0$ is $v(H_0)=m+4+2(15)+(6m-26)=7m+8=v(G_{3/m})$ and $e(H_0)=m+6+2\cdot14=m+34$.

It is easy to see that $H_0$ is a subgraph of $G_{3/m}$ as the two vertices of $G_{3/m}$ adjacent to every vertex on the initial $C_m$ both have at least fourteen neighbors that are not on the cycle and these neighborhoods are disjoint. 
Now let $H$ denote the planar embedding of $H_0$ as follows: 
We first draw the $K_4$ without crossings so it has four triangular faces. 
Inside one of the faces, we embed $C_m$, inside another we embed two $K_{1,14}$'s, and inside a third, we embed $6m-26$ isolated vertices.

We now demonstrate that it is impossible to add any edge $e$ to $H$ without destroying planarity
such that the resulting plane graph is still a subgraph of $G_{3/m}$.
In $G_{3/m}$, every vertex has a degree at most $3$, except the vertices $V(G_0)$: the $C_m$ and the two high-degree vertices adjacent to all vertices of that $C_m$. 
A vertex from  $V(C_m)$ has degree $13$ in $G_{3/m}$. Indeed, it is adjacent to two other vertices on the $C_m$, the two high-degree vertices, four additional vertices of degree $3$ (corresponding to incident faces in $G_0$), four additional vertices of degree $2$ (corresponding to incident edges in $G_0)$, and one vertex of degree $1$. 
Each of the two high-degree vertices of $G_0$  has degree $3m+1\geq{22}$ in $G_{3/m}$. 
Indeed, each is adjacent to $m$ vertices in $G_0$, as well as $m$  degree $3$ vertices, $m$ degree $2$ vertices, and one degree $1$ vertex in $G_{3/m}$.

As $G_{3/m}$ only has two vertices of degree at least $14$, the two degree $14$ vertices in $H$ can be uniquely identified (up to swapping) in the original graph $G_{3/m}$. Now we seek to identify the $C_m$ which does not use either of these two vertices. 
If we look at the original graph $G_{3/m}$ and remove all edges incident to the two highest degree vertices which have already been identified, we are left with a $K_4$, which cannot contain a $C_m$, and a graph $G'$ as follows:

\begin{itemize}
    \item $G'$ contains a $C_m$,
    \item For each vertex $v$ of the $C_m$, there is a vertex of degree $1$ adjacent to $v$,
    \item For each edge $uv$ of the $C_m$, there are three vertices of degree $2$ with the neighborhood $\{u,v\}$. (In $G_{3/m}$, the corresponding vertices are the vertex with neighborhood $\{u,v\}$ and the two vertices adjacent to $u, v,$ and  one of the two high-degree vertices.)
\end{itemize}

We claim that $G'$ has only one copy of $C_m$. Suppose the initial $C_m$ is labeled $v_1,v_2,\dots,v_m,v_1$ and that the three degree $2$ vertices with neighborhood $\{v_i,v_{i+1}\}\pmod{m}$ are labeled $a_i, b_i$, and $c_i$. The degree $1$ vertices are not part of any cycle so we ignore them. 

If there is another $C_m$ besides $v_1,v_2,\dots,v_m,v_1$, then it contains either $a_i, b_i$ or $c_i$ for some $i$. For a given $i$, any cycle containing two of $\{a_i,b_i,c_i\}$ is simply a $C_4$ containing those vertices along with $v_i$ and $v_{i+1}$. As $m\geq{7}$, this is not a $C_m$ and it suffices to ignore every $b_i$ and $c_i$. Now if any $a_i$ is used in a cycle, then its neighbors $v_i$ and $v_{i+1}$ are both used, but the edge $v_iv_{i+1}$ is not, as then the cycle would be $C_3$. Thus, the rest of the cycle is a path from $v_{i+1}$ to $v_{i}$, which necessarily contains each $v_j$ for $1\leq{j}\leq{m}$. Thus, the total length of this cycle is at least $2+(m-1)>m$, a contradiction. 

As there is only one copy of $C_m$ in $G'$, the $C_m$ in $H$ can be uniquely identified (up to the rotation and reflection) in the original graph $G_{3/m}$. 
With the exception of the six edges in the $K_4$, every other edge of $G_{3/m}$ is incident to a vertex from $V(G_0)$. Therefore, the $K_4$ in $H$ can be identified as the disjoint $K_4$ in $G_{3/m}$.

The only potential edges we can add to $H$ without causing a crossing are:
\begin{itemize}
    \item[i)] Edges between a vertex of the identified $K_4$ and another vertex,
    \item[ii)] Additional edges between two vertices of the identified $C_m$,
    \item[iii)] Additional edges among the vertices of the two $K_{1,14}$'s,
    \item[iv)] Edges between two of the $6m-26$ isolated vertices.
\end{itemize}

It is easy to note that the edges of Types i), ii) do not exist in $G_{3/m}$. Edges of Type iii) also do not exist in $G_{3/m}$ since the two degree $3m+1$ vertices of $G_{3/m}$ are nonadjacent and have no common neighbors besides the already identified $C_m$. 
Furthermore, any edge of $G_{3/m}$ (besides the $K_4$ component) contains at least one vertex of $G_0$ so edges between the leaves of the two $K_{1,14}$'s do not exist in $G_{3/m}$, nor do edges of Type iv).
This establishes that $H$ is a plane-saturated subgraph of $G_{3/m}$. We observe that
\[
\frac{e(H)}{e(G_{3/m})}=\frac{m+34}{16m+8}< \frac{1}{16}+\frac{3}{m}.
\]
\end{proof}

\subsection{Lower Bound Proof}

In this section, we show that for every planar graph $G$ with no degree $1$ and degree $2$ twins, the following holds:
    \[
        \psr(G)>\frac{1}{16}.
    \]

\subsubsection{Preliminaries}\label{sub:structure}

Let $H$ be a plane-saturated subgraph of $G$.
Let $I$ denote the set of isolated vertices of $H$. Let us recall that we use \textit{skeleton} to refer to all the components of $H- I$ and \textit{skeleton vertices} to refer to $V(H)- I$. 
Let $H$ be the skeleton of $H$. 
We distinguish different types of connected components of the skeleton: 
\begin{itemize}
    \item connected components with a cycle,
    \item connected components that are trees of diameter at least three, 
    \item connected components isomorphic to stars with at least four vertices,
    \item connected components isomorphic to stars with three vertices,
    \item connected components isomorphic to edges (matching edges).
\end{itemize}

We will show that it is enough to assume that the image of $I$ is an independent set. 

\begin{lemma}\label{facewith2}
    If every face of  $H$ contains at most one isolated vertex, then    
    \[
       \frac{e(H)}{e(G)}>\frac{1}{16}.
    \]
\end{lemma}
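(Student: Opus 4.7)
My plan is to combine the hypothesis with Euler's formula for the skeleton and the standard planar bound $e(G)\le 3v(G)-6$; under the stated face condition this yields a ratio bound much stronger than $1/16$.

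Let $S:=H-I$ be the skeleton, viewed with the embedding induced from $H$. Because each isolated vertex is a single point placed inside a face and does not subdivide it, the faces of the plane graph $H$ are in bijection with the faces of $S$. In particular, $f(S)$ equals the number of faces of $H$, and the hypothesis translates to $|I|\le f(S)$. Euler's formula applied to $S$, letting $c(S)$ denote the number of connected components, gives $v(S)-e(H)+f(S)=1+c(S)$, hence
\[
v(G)=v(S)+|I|\;\le\;v(S)+f(S)\;=\;1+c(S)+e(H).
\]

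Ruling out the trivial cases $v(G)\le 2$ (where $e(G)\le 1$ forces $H=G$), the planar edge bound gives $e(G)\le 3v(G)-6\le 3c(S)+3e(H)-3$. Every component of $S$ contains at least one edge, so $c(S)\le e(H)$, which yields $e(G)\le 6e(H)-3<6e(H)$, that is, $e(H)/e(G)>1/6>1/16$.

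The proof is essentially one application of Euler's formula, so I do not foresee a real obstacle. The only point to verify carefully is that isolated vertices do not change the face count, which follows immediately from the definition of a plane embedding.
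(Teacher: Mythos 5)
Your proof is correct and follows essentially the same route as the paper: Euler's formula applied to the skeleton, the bound $|I|\le f$ from the face hypothesis, $c(S)\le e(H)$ since every skeleton component has an edge, and planarity of $G$ to conclude $e(G)<6e(H)$, hence a ratio above $1/6>1/16$. The only cosmetic differences are your explicit handling of $v(G)\le 2$ and your use of $e(G)\le 3v(G)-6$ where the paper uses $e(G)\le 3v(G)-3$ to sidestep that small case.
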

\begin{proof}
Since $H'$ is planar, we have that $v(H')+f(H')=e(H')+k+1$ where $k$ is the number of connected components of $H'$. As each component contains an edge, we have that $k\le e(H')$, so $v(H')+f(H')\le 2e(H')+1$.
Since $|I|\le f(H')$, we have that $v(G)\le v(H')+f(H')\le 2e(H')+1=2e(H)+1$. Since $G$ is planar, we have $e(G)\le 3(2e(H)+1)-3=6e(H)$. 
Thus, \[\frac{e(H)}{e(G)}\ge 1/6>1/16,\] as desired.
\end{proof}

The following is a direct corollary of Lemma~\ref{facewith2}.

\begin{corollary}\label{Cor:independetnt}
    For every embedding of $H$ in $G$, the image of $I$ is an independent set or 
     \[
       \frac{e(H)}{e(G)}>\frac{1}{16}.
    \]
\end{corollary}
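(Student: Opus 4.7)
The plan is to combine \textbf{Lemma~\ref{facewith2}} with a short relabeling argument that exploits the fact that the isolated vertices of $H$ are entirely free of edge constraints, so that their images under any embedding can be permuted freely without destroying the embedding.

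First I would split into two cases according to the face structure of the skeleton $H'$. If every face of $H'$ contains at most one vertex of $I$, then \textbf{Lemma~\ref{facewith2}} immediately gives $\frac{e(H)}{e(G)} > \frac{1}{16}$, and the conclusion holds. Otherwise, there is some face $F$ of $H'$ containing at least two isolated vertices $a, b \in I$, and in this case I would show that $\phi(I)$ must be independent for \emph{every} embedding $\phi$ of $H$ into $G$.

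Fix such an embedding $\phi \colon V(H) \to V(G)$ and suppose, for contradiction, that $\phi(I)$ is not independent in $G$: there exist distinct $u, v \in I$ with $\phi(u)\phi(v) \in E(G)$. The key observation is that since every vertex of $I$ is isolated in $H$, any bijection from $I$ to $\phi(I)$ composed with the restriction of $\phi$ to the skeleton vertices is again a valid embedding of $H$ into $G$. Using this freedom I would define a new embedding $\phi'$ that agrees with $\phi$ on the skeleton, sends $a \mapsto \phi(u)$ and $b \mapsto \phi(v)$, and permutes the remaining images of $I$ arbitrarily. Under $\phi'$, the isolated vertices $a$ and $b$ still lie inside the face $F$ of $H'$ and satisfy $\phi'(a)\phi'(b) = \phi(u)\phi(v) \in E(G)$, so the edge $ab$ can be drawn inside $F$ without introducing any crossing. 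Hence $H + ab$ is a plane subgraph of $G$ via $\phi'$, contradicting the plane-saturation of $H$.

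No substantive obstacle arises; the only point requiring verification is that the relabeling $\phi \mapsto \phi'$ really is an embedding, which reduces to the triviality that isolated vertices impose no adjacency constraints. Note that the plane drawing of $H$ itself is never modified in the argument, so the same planar picture simultaneously certifies plane-saturation in the hypothesis and saturation-violation after the relabeling.
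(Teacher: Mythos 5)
Your proposal is correct and follows essentially the same route as the paper: invoke Lemma~\ref{facewith2} to reduce to the case where some face contains two isolated vertices, then use the freedom to permute the images of $I$ (since isolated vertices impose no adjacency constraints) to place those two same-face vertices on the endpoints of the alleged edge of $G$, contradicting plane-saturation. The paper states this relabeling in one sentence; you merely spell it out explicitly.
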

\begin{proof}
By  Lemma~\ref{facewith2}, we have either $\frac{e(H)}{e(G)}>\frac{1}{16},$
or there is a face of $H$ containing two isolated vertices.
If there is an embedding of $H$ in $G$ such that the image of $I$ induces an edge of $G$, then there is an embedding of $H$ in $G$ such that the images of the two isolated vertices from the same face induce an edge in $G$. Thus, it is possible to add this edge to $H$, a contradiction
since $H$ is a plane-saturated subgraph of $G$.
\end{proof}

\begin{obs}\label{obs:all_I_in_one_face}
By Corollary~\ref{Cor:independetnt}, we may assume that all vertices of $I$ are embedded in the same face of $H$ or
  \[
       \frac{e(H)}{e(G)}>\frac{1}{16},
    \] 
    and hence we are done.
\end{obs}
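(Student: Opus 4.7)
The plan is to show that for any plane-saturated $H$, we can always re-embed the isolated vertices into a single face of the skeleton $H'$ while preserving plane-saturation; then we may simply replace $H$ by this re-embedded version. Concretely, I would fix any isolated vertex $v^*\in I$ residing in a face $f^*$ of $H'$, and construct $H^*$ from $H$ by moving every other isolated vertex into $f^*$ (keeping the skeleton $H'$ and the position of $v^*$ unchanged). Since the abstract graph is unchanged, $H^*$ has the same edge count as $H$ and remains a subgraph of $G$. The whole problem then reduces to verifying that $H^*$ is plane-saturated.

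To verify this, I would argue by contradiction: suppose some edge $e=xy$ were addable in $H^*$, witnessed by a subgraph embedding $\phi'\colon V(H)\to V(G)$ with $\phi'(x)\phi'(y)\in E(G)$. If $e$ is incident to no moved vertex, then its drawing in $H^*$ can be transferred back to $H$ (after a harmless perturbation to avoid the old positions of moved vertices, which is possible because the skeleton is the same in $H$ and $H^*$), contradicting plane-saturation of $H$. Otherwise $e=vu$ for some moved vertex $v\in I$ originally in a face $f_v\ne f^*$. Applying Corollary~\ref{Cor:independetnt} to $\phi'$ (in the alternative where we are not already done) shows that $\phi'(I)$ is independent in $G$, so $u\notin I$; thus $u$ is a skeleton vertex. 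Drawability of $vu$ in $H^*$ forces $u\in\partial f^*$, while non-addability of $vu$ in $H$ forces $u\notin\partial f_v$.

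The crux of the argument, and what I expect to be the main obstacle, is a label-swap step. I would define $\phi''\colon V(H)\to V(G)$ by exchanging the $\phi'$-images of $v$ and $v^*$; since both are isolated in $H$, $\phi''$ remains a valid subgraph embedding of $H$ into $G$, and $\phi''(v^*)\phi''(u)=\phi'(v)\phi'(u)\in E(G)$. Therefore $H+v^*u$ is a subgraph of $G$ via $\phi''$. Because $v^*\in f^*$ and $u\in\partial f^*$ in the original plane embedding of $H$, the edge $v^*u$ can be drawn in $H$ without crossings, so it is addable to $H$, contradicting plane-saturation of $H$. The subtlety is that the witness $vu$ from $H^*$ need not itself be drawable in $H$; the trick is to exploit the symmetry among isolated vertices in the subgraph embedding to route addability through the stationary $v^*\in f^*$, which was chosen precisely so that it already lies adjacent to $\partial f^*$.
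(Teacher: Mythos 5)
Your proposal is correct and takes essentially the route the paper intends: the paper states Observation~\ref{obs:all_I_in_one_face} with only a citation to Corollary~\ref{Cor:independetnt}, and the implicit argument is exactly your re-embedding of all isolated vertices into one face, with isolated--isolated edges excluded by the independence of $\phi(I)$ and isolated--boundary edges excluded by swapping the moved vertex with the stationary $v^*$ (the same image-swap trick the paper uses in Lemmas~\ref{lem:noedge} and~\ref{no2separateNeighbors}). You have merely written out in full the details (including the skeleton--skeleton case and the swap step) that the paper leaves implicit.
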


Choose an arbitrary embedding $\phi:V(H)\rightarrow V(G)$ of $H$ in $G$. 
Now we go through a series of steps to modify the embedding $\phi$ to increase the number of skeleton vertices mapped to vertices of degree $1$ or $2$.

\begin{enumerate}
\item \label{Phase_one} Phase 1
\begin{enumerate}
    \item Select a matching edge $e$ of the skeleton such that $\phi(e)=ab$, $\abs{N_G(b)\cap \phi(I)}\geq 2$, and there exists a vertex $c\in N_G(b)\cap \phi(I) $ with $d_{G}(c)=1$.
    
    \item We may instead embed this matching edge to the edge $bc$ of $G$, placing $a$ in the image of $I$, and not changing anything else.
    Thus we redefine  $\phi(\phi^{-1}(a)):=c$  and $\phi(\phi^{-1}(c)):=a$. 
    
    \item  We repeat this process until there are no more matching edges of this form. The process is guaranteed to terminate because, on each step, the number of degree one vertices in $\phi(I)$ is strictly decreasing since $G$ does not contain degree $1$ twins.
\end{enumerate}

\item \label{phase_two} Phase 2
\begin{enumerate}
\item  Select a matching edge $e$ of the skeleton such that $\phi(e)=ab$, $d_G(a)\geq 3$,  $\abs{N_G(b)\cap \phi(I)}\ge 2$, and there is a vertex $c\in N_G(b)\cap \phi(I)$ with $d_{G}(c)=2$. (All other $c'\in N_G(b)\cap \phi(I)$ satisfy $d_G(c')\ge 2$.)

\item We may instead embed this matching edge to edge $bc$ of $G$, placing $a$ in the image of $I$, and not changing anything else.
Thus, we redefine $ \phi(\phi^{-1}(a)):=c$ and $\phi(\phi^{-1}(c)):=a$. 

\item We repeat this process until there are no more matching edges of this form. 
The process is guaranteed to terminate because, on each step, the number of degree two vertices in $\phi(I)$ is strictly decreasing.
\end{enumerate}
\end{enumerate}

At the end of this process, we have now fixed our embedding $\phi$. For a matching edge $e$ of $H$, there are four possible ways by which  $\phi$ can embed $e$ in $G$.
\begin{enumerate}
    \item[Type M1:] A vertex of $\phi(e)$ has degree $1$ in $G$ and the other has at least two neighbors in $\phi(I)$, which both have degree at least $2$ in $G$.
    
    \item[Type M2:]  A vertex of $\phi(e)$ has degree $2$ in $G$ and the other has at least two neighbors in $\phi(I)$, which both have degree at least $2$ in $G$.
    
    \item[Type M3:] A vertex of $\phi(e)$ has at least two neighbors in $\phi(I)$, which both have degree at least $3$ in $G$, and the other vertex of $\phi(e)$ also has degree at least $3$ in $G$.
    
    \item[Type M4:] Both vertices of $\phi(e)$ have at most one neighbor in $\phi(I)$.
    
\end{enumerate}

Let $r_1, r_2, r_3,$ and $y$ denote the number of matching edges of Types M1, M2, M3, and M4 respectively.

We recall that $G$ is a planar graph with no degree $1$ and degree $2$ twins with a plane-saturated subgraph $H$ and that $\phi$ is an embedding of $H$ in $G$. 
The vertex set $I$ is the set of all isolated vertices of $H$, embedded in the same face of 
$H$ and $\phi(I)$ is an independent set of $G$ by Corollary~\ref{Cor:independetnt}. 
We now prove a series of results imposing restrictions on the edges of $G$ incident to one vertex in $\phi(I)$.

\begin{lemma}\label{lem:noedge}
The set of the degree 1 endpoints of the images of Type 1 matching edges and the degree 2 endpoints of the images of Type 2 matching edges is an independent set of $G$.
\end{lemma}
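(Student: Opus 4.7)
The plan is to argue by contradiction. Suppose distinct vertices $a_1,a_2$ in the claimed set satisfy $a_1a_2\in E(G)$, and write $\phi(e_i)=a_ib_i$ for the matching edges $e_1,e_2$ from which $a_1,a_2$ arise. The key structural fact, used throughout, is that in both Type M1 and Type M2 the vertex $b_i$ has at least two neighbors in $\phi(I)$ in addition to $a_i$, so $d_G(b_i)\ge 3$.

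I first dispose of all cases decided by degrees alone. If $d_G(a_i)=1$ for some $i$, then $a_i$'s unique neighbor is $b_i$, so $a_{3-i}=b_i$; but $d_G(b_i)\ge 3$ contradicts that $a_{3-i}$ has degree $1$ or $2$. The same inequality handles the case $d_G(a_1)=d_G(a_2)=2$ with $a_{3-i}=b_i$ for some $i$. Thus it suffices to treat the case where both $e_1,e_2$ are of Type M2 and $a_1\ne b_2$, $a_2\ne b_1$.

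This remaining case is the main obstacle and is handled by a re-embedding argument. Since each $b_i$ has at least two neighbors in $\phi(I)$, I choose $c_1\in N_G(b_1)\cap\phi(I)$ and $c_2\in N_G(b_2)\cap\phi(I)$ with $c_1\ne c_2$. Setting $u_i:=\phi^{-1}(a_i)$ and $v_i:=\phi^{-1}(c_i)$, I define $\phi'$ to agree with $\phi$ off $\{u_1,u_2,v_1,v_2\}$ and to swap the values at each pair $(u_i,v_i)$, so that $\phi'(u_i)=c_i$ and $\phi'(v_i)=a_i$. A routine check then verifies that $\phi'$ is a valid embedding of $H$ into $G$: the only edges of $H$ affected are $e_1,e_2$, whose new images $b_1c_1$ and $b_2c_2$ lie in $E(G)$; and the four values $a_1,a_2,c_1,c_2$ are pairwise distinct because $a_1,a_2$ are skeleton images while $c_1,c_2\in\phi(I)$.

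Finally, by Observation~\ref{obs:all_I_in_one_face} all isolated vertices of $H$ lie in a common face of the skeleton, so $v_1,v_2\in I$ can be joined by a curve inside that face without introducing any crossing. Since $\phi'(v_1)\phi'(v_2)=a_1a_2\in E(G)$, the graph $H+v_1v_2$ embeds in $G$ via $\phi'$, contradicting the plane-saturation of $H$.
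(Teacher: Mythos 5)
Your proof is correct and follows essentially the same route as the paper: reduce to the case of two degree-$2$ endpoints of Type M2 edges, pick distinct $\phi(I)$-neighbors of the two "heavy" partners, swap them with the degree-$2$ endpoints to get a new embedding, and then add an edge between two isolated vertices lying in the common face, contradicting plane-saturation. Your explicit degree argument for the M1/degenerate cases is just a spelled-out version of what the paper treats implicitly.
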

\begin{proof}
If there is such an edge, it must be $e=\phi(v_1)\phi(u_2)$ where $\phi(v_1)$, $\phi(u_2)$ have degree 2 in $G$ and $v_1$, $u_2$ are endpoints of Type M2 matching edges $e_1=v_1v_2$ and $e_2=u_1u_2$.
Then the vertices $\phi(v_2), \phi(u_1)$ each has at least two neighbors in $\phi(I)$, so without loss of generality, there exist distinct isolated vertices, $w_1, w_2$ in the same face of $H$ such that $\phi(w_1)\phi(v_2)$ and $\phi(w_2)\phi(u_1)$ are edges in $G$. Let $H_1$ be the plane graph obtained from $H$ by adding edge $w_1w_2$. It is easy to see that $H_1$ is a subgraph of $G$ with the embedding $\phi':V(H_1)\to V(G)$ defined as follows: $\phi'(u_2)=\phi(w_2),\phi'(w_2)=\phi(u_2),\phi'(v_1)=\phi(w_1),\phi'(w_1)=\phi(v_1)$, and $\phi'=\phi$ everywhere else.
One may add edge $w_1w_2$ without causing a crossing, so this contradicts the condition that $H$ is plane-saturated. Thus, there is no such edge $\phi(v_1)\phi(u_2)$.

    
\end{proof}

\begin{lemma}\label{no2separateNeighbors}
    Let  $\mathcal{C}$ be an acyclic connected component of the skeleton of $H$, $u$ be a leaf of $\mathcal{C}$,  $v$ be the sole neighbor of $u$ in $\mathcal{C}$ and $w_1, w_2\in I$. 
    If the edges $\phi(u) \phi(w_1)$ and $\phi(v)\phi(w_2)$ are edges of $G$, then $w_1=w_2$. 
\end{lemma}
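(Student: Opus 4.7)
The plan is to argue by contradiction: assume $w_1\neq w_2$ and exhibit a single edge whose addition to $H$ preserves both planarity and the subgraph-of-$G$ property, contradicting plane-saturation. The candidate edge is $w_1 w_2$ itself. Planarity is immediate since both endpoints are isolated vertices of $H$, and by Observation~\ref{obs:all_I_in_one_face} we may assume every vertex of $I$ lies in a common face $\mathcal{F}$ of $H$, so $w_1w_2$ can be drawn inside $\mathcal{F}$ without crossing any existing edge.

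The key step is to construct an injective embedding $\psi : V(H+w_1w_2)\to V(G)$ preserving all edges. I would set $\psi$ equal to $\phi$ everywhere except that $\psi(u):=\phi(w_2)$ and $\psi(w_2):=\phi(u)$. Because $u$ is a leaf of $\mathcal{C}$ its only $H$-edge is $uv$, and because $w_2\in I$ it has no $H$-edges; hence the only edges whose $\psi$-images differ from their $\phi$-images are $uv$, now mapped to $\phi(w_2)\phi(v)$, and the new edge $w_1w_2$, mapped to $\phi(w_1)\phi(u)$. Both are in $E(G)$ by the hypotheses of the lemma, while every other edge of $H$ is preserved since $\psi$ agrees with $\phi$ there. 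Injectivity of $\psi$ follows immediately from injectivity of $\phi$ together with $u\neq w_2$.

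The main subtlety is choosing the right transposition. A naive double swap, sending $u\leftrightarrow w_1$ and $v\leftrightarrow w_2$, would map the new edge $w_1w_2$ to the existing edge $\phi(u)\phi(v)$, but then $uv$ would have to map to $\phi(w_1)\phi(w_2)$, which is \emph{not} a $G$-edge since $\phi(I)$ is independent by Corollary~\ref{Cor:independetnt}. The correct move is to leave $v$ fixed, so that the other edges of $\mathcal{C}$ incident to $v$ remain undisturbed, and to swap $u$ with $w_2$ (rather than $u$ with $w_1$); the leaf condition on $u$ ensures there are no further $u$-edges to worry about, and the two edges that actually require checking are then matched exactly with the two hypothesised $G$-edges $\phi(v)\phi(w_2)$ and $\phi(u)\phi(w_1)$. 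This yields the desired contradiction and forces $w_1=w_2$.
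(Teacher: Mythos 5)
Your proposal is correct and is essentially the paper's own argument: swap $u$ with $w_2$ (leaving $v$ and everything else fixed), note that the only affected edges $uv$ and the new edge $w_1w_2$ map to the hypothesised $G$-edges $\phi(v)\phi(w_2)$ and $\phi(u)\phi(w_1)$, and draw $w_1w_2$ inside the common face of the isolated vertices to contradict plane-saturation. The paper phrases the same-face assumption via Lemma~\ref{facewith2}/Observation~\ref{obs:all_I_in_one_face} exactly as you do, so there is no substantive difference.
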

\begin{proof}
By Lemma~\ref{facewith2}, there is a face of $H$ with two isolated vertices. Without loss of generality, we may assume $w_1$ and $w_2$ are in the same face of $H$.
Let $\phi':V(H)\rightarrow V(G)$ be a bijection obtained from $\phi$ in the following way:
$\phi'(u)=\phi(w_2)$, $\phi'(w_2)=\phi(u)$ and $\phi'=\phi$ everywhere else. 
If $w_1\neq w_2$, then let $H_1$ be the plane graph obtained from $H$ by adding edge $w_1w_2$. 
It is easy to see that $H_1$ is a subgraph of $G$ with the embedding $\phi'$.
Thus, $H$ is not a plane-saturated subgraph of $G$, since we may add an edge $w_1w_2$ without causing a crossing, a contradiction.
\end{proof}

\begin{corollary}\label{onlyshared}
     Let  $\mathcal{C}$ be an acyclic connected component of the skeleton of $H$, $u$ be a leaf of $\mathcal{C}$,  and $v$ be the sole neighbor of $u$ in $\mathcal{C}$. Then either,
    \begin{itemize}
        \item[i)] One of the vertices $\phi(u)$ and $\phi(v)$ has no neighbor in $\phi(I)$, 
        \item[ii)] or $N_G(\phi(u))\cap \phi(I)=N_G(\phi(v)) \cap \phi(I)$ and $\abs{N_G(\phi(u))\cap \phi(I)}=1$.
    \end{itemize}
\end{corollary}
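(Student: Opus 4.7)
The plan is to derive Corollary~\ref{onlyshared} as an almost immediate consequence of Lemma~\ref{no2separateNeighbors}. The dichotomy in the statement is of the form ``either one side is empty, or both sides are a common singleton,'' which is exactly the shape of conclusion one gets by running Lemma~\ref{no2separateNeighbors} across all admissible pairs of witnesses.

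Concretely, I would assume case (i) fails and produce case (ii). So suppose both $\phi(u)$ and $\phi(v)$ have at least one neighbor inside $\phi(I)$. Pick arbitrary $x \in N_G(\phi(u))\cap \phi(I)$ and $y \in N_G(\phi(v))\cap \phi(I)$, and set $w_1 := \phi^{-1}(x)$ and $w_2 := \phi^{-1}(y)$, which lie in $I$. Since $\phi(u)\phi(w_1)=\phi(u)x$ and $\phi(v)\phi(w_2)=\phi(v)y$ are edges of $G$, Lemma~\ref{no2separateNeighbors} forces $w_1=w_2$, hence $x=y$.

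Now I would use this cross-equality in both directions: fixing $y \in N_G(\phi(v))\cap \phi(I)$ and letting $x$ range over $N_G(\phi(u))\cap \phi(I)$ shows that the latter set is the singleton $\{y\}$; symmetrically, $N_G(\phi(v))\cap \phi(I)=\{x\}$ for any $x \in N_G(\phi(u))\cap \phi(I)$. Combining these two conclusions gives $N_G(\phi(u))\cap \phi(I)=N_G(\phi(v))\cap \phi(I)$, a single common vertex, which is exactly case (ii).

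I do not expect a real obstacle here; the only thing to be slightly careful about is ensuring the hypotheses of Lemma~\ref{no2separateNeighbors} are in force, namely that $u$ is a leaf of the acyclic component $\mathcal{C}$ and $v$ is its sole neighbor in $\mathcal{C}$, both of which are given in the statement of the corollary, and that $w_1,w_2\in I$, which holds because $x,y\in\phi(I)$ and $\phi$ is a bijection. No embedding modification or counting is needed for this step.
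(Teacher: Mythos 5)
Your argument is correct and is exactly the paper's route: the paper proves this corollary by simply citing Lemma~\ref{no2separateNeighbors}, and your write-up just fills in the (straightforward) details of how the lemma forces any pair of witnesses $x\in N_G(\phi(u))\cap\phi(I)$, $y\in N_G(\phi(v))\cap\phi(I)$ to coincide, yielding the common singleton in case (ii). Nothing further is needed.
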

\begin{proof}
    Follows from Lemma~\ref{no2separateNeighbors}.
\end{proof}

A tree that is not isomorphic to a star has at least two leaves with different sole neighbors. 
By Corollary \ref{onlyshared}, if a component $\mathcal{C}$ of $H$ is isomorphic to such a tree, then it falls into one of the following three types:
\begin{itemize}
    \item[Type T1:] At least two vertices of $\mathcal{C}$ are mapped to vertices of $G$ with no neighbors in  $\phi(I)$.
    
    \item[Type T2:] One vertex in $\mathcal{C}$ is mapped to a vertex of $G$ with no neighbors in $\phi(I)$, and two other vertices are mapped to vertices $w$ and $w'$ of $G$ with $N_G(w)\cap \phi(I)=N_G(w') \cap \phi(I)$ and $\abs{N_G(w)\cap \phi(I)}=1$.
    
    \item[Type T3:] There are two disjoint pairs of vertices in $\mathcal{C}$ where each pair is mapped to a pair of vertices
    $w_1$,$w'_1$ and  $w_2$, $w'_2$ of $G$ with $N_G(w_i)\cap \phi(I)=N_G(w'_i) \cap \phi(I)$ and $\abs{N_G(w_i)\cap \phi(I)}=1$ for $i\in\{1,2\}$.
\end{itemize}

We enumerate these trees in the following way. 
Let $A, B$, and $C$ be the sets of components of Types $T1, T2$, and $T3$, respectively. The numbers of vertices of the trees in $A$ are denoted $a_1,a_2,\dots,a_{|A|}$, the numbers of vertices of the trees in $B$ are denoted $b_1,b_2,\dots,b_{|B|}$, and the numbers of vertices of the trees in $C$ are denoted $c_1,c_2,\dots,c_{|C|}$.

For components $\mathcal{C}$ isomorphic to stars on at least four vertices, there are four possibilities: 
\begin{itemize}
    \item [Type S1:] There is a vertex in $\phi(I)$ incident to the image of the center of the star and to the image of at least one leaf of the star. (Thus by Corollary~\ref{onlyshared}, this is the only vertex of $\phi(I)$ incident to a vertex in the image of the star $\mathcal{C}$.)
    
    \item[Type S2:]     Only the image of the center of the star $\mathcal{C}$ has neighbors in  $\phi(I)$.
    
    \item[Type S3:] The image of at least one leaf of $\mathcal{C}$ has a neighbor in $\phi(I)$, but the image of the center does not have any.
    \item[Type S4:] No vertex from $\phi(V(\mathcal{C}))$ has a neighbor in  $\phi(I)$.
\end{itemize}

Let $D, E, F$, and $L$ be the sets of components of Types $S1, S2, S3$, and $S4$, respectively. The numbers of vertices of the stars in $D$ are denoted $d_1,d_2,\dots,d_{|D|}$, the numbers of vertices of the stars in $E$ are denoted $e_1,e_2,\dots,e_{|E|}$, the numbers of vertices of the stars in $F$ are denoted $f_1,f_2,\dots,f_{|F|}$, and the numbers of vertices of the stars in $L$ are denoted $l_1,l_2,\dots,l_{|L|}$.

For components $\mathcal{C}$ isomorphic to trees with  three vertices, there are two possibilities: 
\begin{itemize}
    \item [Type S5:] There is at most one vertex in $\phi(V(\mathcal{C}))$ adjacent to a vertex from $\phi(I)$.
    
    \item[Type S6:]     There is more than one vertex in $\phi(V(\mathcal{C}))$ adjacent to a vertex from $\phi(I)$.
    
\end{itemize}

 Let $m$ be the number of stars of Type S5 and $z$ be the number of stars of Type S6. 
 Lastly, let $n_0$ be the total number of vertices in the components of $H$ which contain a cycle. 

\begin{lemma}\label{lem:matching_cherry_odd_neighbouts_in_I}
    There are no two distinct vertices in $\phi(I)$ incident to the images of distinct leaves of 
    a matching edge of type M4 or to the images of distinct leaves of a star of type S6.
\end{lemma}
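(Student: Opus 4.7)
The approach is to adapt the swap argument used in Lemmas \ref{lem:noedge} and \ref{no2separateNeighbors}: assume the forbidden configuration exists, and produce an alternative embedding $\phi' : V(H) \to V(G)$ under which a new edge $i_1 i_2$ (with $i_1, i_2 \in I$) can be added to $H$ without a crossing — using Observation \ref{obs:all_I_in_one_face} to place all of $I$ inside a single face of $H$ — and such that $\phi'(i_1)\phi'(i_2)$ is already an edge of $G$. This will contradict plane-saturation of $H$.

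For the M4 case, suppose the matching edge $uv$ of the skeleton satisfies $\phi(u)w_1, \phi(v)w_2 \in E(G)$ with $w_1 \neq w_2 \in \phi(I)$, and set $i_j = \phi^{-1}(w_j)$. I would define $\phi'$ to swap only $v$ with $i_1$, i.e.\ $\phi'(v) = w_1$, $\phi'(i_1) = \phi(v)$, and $\phi' = \phi$ elsewhere. The matching edge $uv$ then maps to the edge $\phi(u)w_1$, all other edges of $H$ are untouched (since $v$ has no other incident edge and $i_1$ is isolated), and the added edge $i_1 i_2$ maps to $\phi(v)w_2$, both of which lie in $E(G)$.

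For the S6 case, suppose the star $\mathcal{C}$ has center $v$ and leaves $u_1, u_2$ with $\phi(u_1)w_1, \phi(u_2)w_2 \in E(G)$, $w_1 \neq w_2 \in \phi(I)$. A preliminary step applying Corollary \ref{onlyshared} to the pairs $(u_1,v)$ and $(u_2,v)$ forces $\phi(v)$ to have no neighbor in $\phi(I)$; otherwise its unique shared neighborhood with each leaf would equal both $w_1$ and $w_2$. Since a mere leaf-swap would then strand the edges $vu_j$, I would instead apply a cyclic rotation along the length-$4$ path $w_1 - \phi(u_1) - \phi(v) - \phi(u_2) - w_2$ in $G$: set $\phi'(u_1) = w_1$, $\phi'(v) = \phi(u_1)$, $\phi'(u_2) = \phi(v)$, $\phi'(i_1) = \phi(u_2)$, $\phi'(i_2) = w_2$, and $\phi' = \phi$ elsewhere. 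Then $u_1 v \mapsto w_1\phi(u_1)$, $vu_2 \mapsto \phi(u_1)\phi(v)$ (the latter is the $\phi$-image of $u_1v$, hence already in $E(G)$), and the added edge $i_1 i_2 \mapsto \phi(u_2)w_2$, all in $E(G)$.

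The main obstacle is the S6 case: the naive leaf-swap severs the center's adjacencies and the naive center-swap costs an edge to one leaf, so the four-vertex cyclic shift is the balanced move that preserves both star edges while freeing up the edge $\phi(u_2)w_2$ for $i_1i_2$. Verifying injectivity of $\phi'$ reduces to the already-used fact that $\phi(I)$ is disjoint from the images of non-isolated vertices; verifying that $H+i_1i_2$ is still a plane graph is immediate from $i_1, i_2$ lying in a common face.
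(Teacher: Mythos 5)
Your proof is correct and follows essentially the same route as the paper: the M4 case is exactly the swap argument of Lemma~\ref{no2separateNeighbors} (which the paper simply cites), and your four-vertex rotation for the S6 case is precisely the kind of re-embedding the paper alludes to as ``similar'' to Lemmas~\ref{lem:noedge} and~\ref{no2separateNeighbors}, with all details checked. (Your preliminary observation that $\phi(v)$ has no neighbor in $\phi(I)$ is not actually used by the rotation, but it is harmless.)
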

\begin{proof}
The statement trivially holds for M4 since it is a corollary of Lemma~\ref{no2separateNeighbors}. As for a star of type S6, the proof is similar to the proofs of Lemma~\ref{lem:noedge} and Lemma~\ref{no2separateNeighbors}.  
\end{proof}

The following table briefly summarizes the structure of the skeleton of $H$.

\begin{center}
\begin{tabular}{ c | c | c  }
Type & $\#$ of components & $\#$ of vertices\\
 \hline
 Matching Type M1 & $r_1$ & $2r_1$\\ 
 Matching Type M2 & $r_2$ & $2r_2$\\  
 Matching Type M3 & $r_3$ & $2r_3$\\ 
 Matching Type M4 & $y$ & $2y$\\ 
 Tree Type T1 &  $\abs{A}$ & $a_1+a_2+\cdots+a_{\abs{A}}$\\  
 Tree Type T2 &  $\abs{B}$ & $b_1+b_2+\cdots+b_{\abs{B}}$\\ 
 Tree Type T3 &  $\abs{C}$ & $c_1+c_2+\cdots+c_{\abs{C}}$\\  
 Star Type S1 &  $\abs{D}$ & $d_1+d_2+\cdots+d_{\abs{D}}$\\ 
 Star Type S2 &  $\abs{E}$ & $e_1+e_2+\cdots+e_{\abs{E}}$\\  
 Star Type S3 &  $\abs{F}$ & $f_1+f_2+\cdots+f_{\abs{F}}$\\ 
 Star Type S4 &  $\abs{L}$ & $l_1+l_2+\cdots+l_{\abs{L}}$\\  
 Star Type S5 &  m & $3m$\\ 
 Star Type S6 &  z  & $3z$\\
Non-tree components& & $n_0$  
\end{tabular}\label{table}
\end{center}

\subsubsection{Establishing bounds on the number of edges of  \texorpdfstring{$H$}{Lg} and  \texorpdfstring{$G$}{Lg}}\label{sub:counts}

After characterizing all of the connected components of $H$, we are ready to bound the number of edges of $H$.

\begin{align}\label{eq:lowerbound_skeleton_edges}
\begin{split}
e(H)&\ge n_0+\left(\sum_{i=1}^{\abs{A}} (a_i-1)+\sum_{i=1}^{\abs{B}}(b_i-1)+\sum_{i=1}^{\abs{C}}(c_i-1)\right)
\\ &+\left(\sum_{i=1}^{\abs{D}}(d_i-1)+\sum_{i=1}^{\abs{E}}(e_i-1)+\sum_{i=1}^{\abs{F}}(f_i-1)+\sum_{i=1}^{\abs{L}}(l_i-1)\right)
+2\left(m+z\right)+\left(r_1+r_2+r_3+y\right).
\end{split}
\end{align}

In the upcoming analysis, we will sum over all trees of a specific type, omitting explicit indices in summations for brevity.

To determine $e(G)$, we must keep track of edges between two vertices that are each the image of skeleton vertices as well as edges between a vertex in the image of $I$ and a vertex in the image of the skeleton.  By Corollary~\ref{Cor:independetnt}, the image of $I$ is an independent set.

The number of edges between two vertices that are each the image of skeleton vertices is the number of edges in the induced subgraph on the images of the skeleton vertices. This is a planar graph, so the number of edges is less than three times the number of vertices. 
Note however that we can improve this bound when $r_1$ or $r_2$ is positive since there are at least $r_1$ vertices of degree $1$, one endpoint of the image of each Type M1 matching edge,  and at least $r_2$ others of degree $2$, one endpoint of the image of each Type M2 matching edge. Furthermore, by Lemma~\ref{lem:noedge}, these low-degree vertices induce an independent set.

Thus, the number of edges in $G$ in the induced subgraph on the image of the skeleton is less than
\begin{align}\label{eq:skeleton_edges}
\begin{split}
3&\left(n_0+\sum a_i+\sum b_i+\sum c_i+\sum d_i+\sum e_i+\sum f_i+\sum l_i\right)\\ +  3&\left(2(r_1+r_2+r_3+y)+3(m+z)\right)-2r_1-r_2.    
\end{split}
\end{align}

Define the set $P$ consisting of the following vertices:
\begin{itemize}
    \item For each tree of Type T2, include the images of two vertices: namely the images of a leaf and its sole neighbor which have a sole common neighbor in $\phi(I)$. 
    \item For each tree of Type T3, include the images of four vertices: namely the images of two pairs of a leaf and its sole neighbor which have a sole common neighbor in $\phi(I)$. 
    
    \item For each star of Type S1, include the images of all vertices. 
    \item For each matching edge of Type M4, include the images of both vertices.
    \item For each star of Type S6, include the images of all vertices.
\end{itemize}

The number of edges between a vertex in $P$ and a vertex in $\phi(I)$ is at most 
\[
2|B|+4|C|+\sum d_i + 2y +3z.
\]

Removing these edges changes the neighborhoods of some vertices in $\phi(I)$ and potentially causes there to now be two degree $1$ or degree $2$ vertices in $\phi(I)$ with the same neighborhood. To avoid this issue, if some vertex in $\phi(I)$ has at least one neighbor in $P$, but at most two neighbors not in $P$, we will also remove its remaining one or two incident edges.

Note that the number of vertices in $\phi(I)$ with at least one neighbor in $P$ is at most 
\[
|B|+2|C|+|D|+y+z.
\]

Thus the number of edges incident to vertices of $\phi(I)$ with at least one neighbor in $P$ is at most $(2|B|+4|C|+\sum d_i + 2y +3z)+2(|B|+2|C|+|D|+y+z)$, for a total upper bound of:
\begin{equation}\label{eq:otherI}
4|B|+8|C|+\sum (d_i+2)+4y+5z.
\end{equation}
Furthermore, in the graph obtained from $G$ by removing all these vertices and edges, the degree $1$ and degree $2$ vertices from $\phi(I)$ have distinct neighborhoods. 

Among the remaining vertices in $\phi(I)$, we let $J_i$ denote the set of such vertices with degree $i$. Because we have previously determined that there are no edges between two vertices in the image of $I$, we get that there are $i|J_i|$ edges incident to $J_i$ for each $i$ and these sets of edges are disjoint. 

Vertices in $J_1$ can be adjacent to vertices in the images of the components of the skeleton that have cycles, that are isomorphic to non-star trees, or that are stars of Type S2, S3, or S5. In particular, no vertex in $J_1$ has a neighbor in the image of a matching edge of $H$. Since no degree $1$ vertices in $\phi(I)$ have the same neighborhood, no vertex in the image of the skeleton has multiple neighbors in $J_1$. 
Thus, the size of $|J_1|$ is at most the number of vertices among skeleton vertices whose images might have
a degree 1 neighbor in the image of I. 
Recall at least two vertices of Type $T_1$ trees are mapped to vertices of $G$ with no neighbors in $\phi(I)$. 
Similarly, for Type $T_2$ and Type $T_3$ trees, there are $3$ and $4$ vertices, respectively whose images either have no neighbors in $\phi(I)$ or only neighbors accounted for in \ref{eq:otherI}.
Therefore, the number of edges in $G$ incident to a vertex from $J_1$ is at most
\begin{align}\label{eq:J_1}
n_0+\sum (a_i-2)+\sum (b_i-3)+\sum(c_i-4)+|E|+\sum(f_i-1)+m.
\end{align}

The vertices in $J_2$ can be adjacent to vertices in the images of the components of the skeleton that have cycles, that are isomorphic to non-star trees, or that are stars of Type S2, S3, or S5. Additionally, they can be adjacent to vertices in the images of matching edges of Types M1, M2, though only to one vertex per such edge by Corollary \ref{onlyshared}. 
Thus, there are at most 
\[
n_0+\sum (a_i-2)+\sum (b_i-3)+\sum(c_i-4)+|E|+\sum(f_i-1)+r_1+r_2+m
\]
vertices in the image of the skeleton which have a neighbor in $J_2$. 
We now construct an auxiliary graph on this vertex set where for every vertex in $J_2$ with neighborhood $\{u,v\}$, we add an edge between $u$ and $v$ in the auxiliary graph. 
Since no two degree $2$ vertices in $\phi(I)$ have the same neighborhood, this is a simple graph. As $G$ is planar, the auxiliary graph is also planar. 
Thus, the number of edges in the auxiliary graph, which is equal to $|J_2|$, is at most three times the number of vertices in the auxiliary graph. Thus we have an upper bound for $|J_2|$.  
The number of edges in $G$ incident to a vertex from $J_2$ is  $2|J_2|$ and is thus at most
\begin{align}\label{eq:J_2}
 6\left(n_0+\sum (a_i-2)+\sum (b_i-3)+\sum(c_i-4)+|E|+\sum(f_i-1)+r_1+r_2+m\right).
\end{align}

Lastly, we need to bound $\sum_{i=3}^{q} i|J_i|$, where $q$ is the maximum degree of a vertex in $\phi(I)$ with no neighbors in $P$. To do this, consider the subgraph, $S$, of $G$ whose edges are the edges incident to $\bigcup_{i=3}^q J_i$ along with the images of the edges of the components of $H$ that are stars of Type S3, 
and the vertices are simply any vertex of $G$ incident to at least one of these edges. 
We note that  $\phi(I)$ is an independent set in $G$ so $\bigcup_{i=3}^q J_i$ is independent in $S$ also. 
The images of the centers of Type S3 stars also have no neighbors in $\phi(I)$. Therefore, the union of $\bigcup_{i=3}^q J_i$ with the set of images of these star centers is an independent set in $S$, which we denote by $T$. Any edge in $S$ is between a vertex of $T$ and a vertex of $V(S)-{T}$, so $S$ is a bipartite planar graph; thus we have $e(S)\le 2v(S)$.
The vertices in the image of the skeleton that cannot have neighbors in $\bigcup_{i=3}^q J_i$ and thus are not vertices of $S$ are the images of at least two vertices per tree of Type T1, at least three per tree of Type T2, at least four per tree of Type T3, any vertex in stars of Type S1, S4, and S6, all the leaves of stars of Type S2, any vertex in a matching edge of Type M4, one vertex per matching edge of Types M1, M2, M3, and at least two vertices per star of Type S5.
\vspace{-5mm}

{\footnotesize 
\begin{align*}
    \sum_{i=3}^{q} i|J_i|+\sum (f_i-1)&\le
    2\left(n_0+\sum (a_i-2)+\sum (b_i-3)+\sum (c_i-4)+|E|+\sum f_i+\sum_{i=1}^3r_i+m+\sum_{i=3}^q |J_i|\right)\\
    \sum_{i=3}^{q} (i-2)|J_i|&\le
    2\left(n_0+\sum (a_i-2)+\sum (b_i-3)+\sum (c_i-4)+|E|+r_1+r_2+r_3+m\right)+\sum (f_i+1)\\
    \sum_{i=3}^q (3i-6)|J_i|&\le 6\left(n_0+\sum (a_i-2)+\sum (b_i-3)+\sum (c_i-4)+|E|+r_1+r_2+r_3+m\right)+3\sum (f_i+1).
\end{align*}}

As $i\le 3i-6$ for $i\ge 3$, we have that $\sum_{i=3}^q i|J_i|$ is bounded above by
\begin{equation}\label{eq:J_3}
    6\left(n_0+\sum (a_i-2)+\sum (b_i-3)+\sum (c_i-4)+|E|+r_1+r_2+r_3+m\right)+3\sum (f_i+1).
\end{equation}

Adding up \ref{eq:skeleton_edges}, \ref{eq:otherI}, \ref{eq:J_1}, \ref{eq:J_2}, and \ref{eq:J_3}, we get an upper bound on $e(G)$, 
\begin{align*}
    e(G)&< 16n_0+\sum(16a_i-26)+\sum(16b_i-35)+\sum(16c_i-44)+\sum(4d_i+2)+\sum(3e_i+13)\\&+\sum(13f_i-4)+\sum(3l_i)+16r_1+17r_2+12r_3+10y+22m+14z.
\end{align*}

Recall Equation~\ref{eq:lowerbound_skeleton_edges}, a lower bound for $e(H)$, 
\begin{align*} 
\begin{split}
e(H)&\ge n_0+\left(\sum (a_i-1)+\sum(b_i-1)+\sum(c_i-1)\right)\\
&+\left(\sum(d_i-1)+\sum(e_i-1)+\sum(f_i-1)+\sum(l_i-1)\right)+\left(r_1+r_2+r_3+y\right)+2\left(m+z\right).
\end{split}
\end{align*}

To check whether $e(H)>e(G)/16$, we use the following lemma.
\begin{lemma}\label{lem:compare}
For a real number $q$ and positive reals $x_i, y_i$ for $1\le i\leq k$, if $\frac{x_i}{y_i}\ge q$ for all $i$, then
\[
\frac{\sum_{i=1}^{k}x_i}{\sum_{i=1}^{k}y_i}\ge q.
\]
\end{lemma}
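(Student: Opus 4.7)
My plan is to treat this as the standard mediant inequality: if every ratio $x_i/y_i$ lies above a common threshold $q$ and all the denominators are positive, then the aggregate ratio lies above $q$ as well. The proof is essentially a one-liner obtained by clearing denominators, summing, and then dividing.

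Concretely, I would begin by using the hypothesis that each $y_i > 0$ to rewrite $x_i/y_i \ge q$ in the equivalent form $x_i \ge q\,y_i$; this step is valid for every $i\in\{1,\dots,k\}$ because multiplying both sides of an inequality by a strictly positive quantity preserves its direction (this works regardless of the sign of $q$, since it is the sign of $y_i$ that governs the multiplication). Summing these $k$ inequalities term by term yields
\[
\sum_{i=1}^{k} x_i \;\ge\; q \sum_{i=1}^{k} y_i.
\]
Because each $y_i$ is strictly positive, the quantity $\sum_{i=1}^{k} y_i$ is a positive real, so I may divide both sides by it without reversing the inequality to obtain the desired bound $\left(\sum_{i=1}^{k} x_i\right)\big/\left(\sum_{i=1}^{k} y_i\right) \ge q$.

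There is no real obstacle: the hypothesis that all $y_i > 0$ is exactly what the two multiplicative steps require, and the argument is uniform in the sign of $q$. I expect the authors to deploy this lemma in the immediately following argument by partitioning the upper bound on $e(G)$ and the lower bound on $e(H)$ from \eqref{eq:lowerbound_skeleton_edges} into matched pairs $(x_i,y_i)$ indexed by component type (matching edges of types M1--M4, trees of types T1--T3, stars of types S1--S6, and cyclic components), and verifying $x_i/y_i > 1/16$ for each pair; the substantive combinatorial work sits in those per-type verifications, not in the lemma itself.
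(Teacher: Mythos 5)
Your proof is correct, and since the paper itself dismisses the lemma with ``the proof is straightforward,'' your clearing-denominators-and-summing argument is exactly the intended one; the care you take to note that positivity of $y_i$ (not the sign of $q$) justifies both multiplicative steps is a fine touch. Your remark about how the lemma is deployed also matches the paper's use, including the grouping of the $r_2$ and $r_3$ terms where a single per-type comparison would fail.
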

\begin{proof}
    The proof is straightforward.
\end{proof}
\subsubsection{Proof when  \texorpdfstring{$4r_3\geq r_2$}{Lg}}\label{sub:proof1}
To establish $\frac{e(H)}{e(G)}>1/16$, we can show that the lower bound for $e(H)$ is at least $1/16$ of the strict upper bound for $e(G)$. Using Lemma~\ref{lem:compare}, it suffices to show that every nonzero term in the bound for $e(H)$ is at least $1/16$ of the corresponding term for $e(G)$.
Note that all $a_i,b_i,c_i,d_i,e_i,f_i$, and $l_i$ are at least $4$, so we have 
\[\frac{a_i-1}{16a_i-26}, \frac{d_i-1}{4d_i+2}, \frac{e_i-1}{3e_i+13} \mbox{,~and~}\frac{f_i-1}{13f_i-4} \geq \frac{1}{16},\] 
and similar inequalities hold for the terms involving each $b_i,c_i,l_i$.

Clearly $\frac{n_0}{16n_0}$ and $\frac{r_1}{16r_1}$ are equal to $1/16$, and $\frac{2m}{22m},\frac{2z}{14z}, \frac{r_3}{12r_3}$, and $\frac{y}{10y}>1/16$. The only exception is $\frac{r_2}{17r_2}=1/17$, so all we have shown so far is that $\frac{e(H)}{e(G)}>1/17$. Making use of the fact that $\frac{r_3}{12r_3}$ is strictly greater than $1/16$ for nonzero $r_3$, we can choose to group the $r_2$ and $r_3$ terms together while keeping everything else separate. 
By Lemma~\ref{lem:compare}, $\frac{e(H)}{e(G)}>1/16$ holds when
\begin{align*}
    \frac{r_2+r_3}{17r_2+12r_3}&\ge\frac{1}{16}\\
    4r_3&\ge r_2.
\end{align*}

Thus, we have established the desired bound whenever $r_3$ is sufficiently large relative to $r_2$. Next, we will bound $e(G)$ in a different manner that gives the desired result $\frac{e(H)}{e(G)}>1/16$ provided that $r_3$ is sufficiently \textit{small} relative to $r_2$. All possible combinations of $r_2$ and $r_3$ will be covered by these cases, thus completing the proof.

\subsubsection{Proof when  \texorpdfstring{$4r_3\leq r_2$}{Lg}}\label{sub:proof2}
We use the same bounds as before on the number of edges incident to vertices in  $\phi(I)$, except for those in $J_2$. 
We also will make a new estimate for the number of edges in the induced graph on the image of the skeleton. 
In particular, we find a bound on this total number of edges as part of one combined process.

Recall that for each matching edge of $H$ of Type M2, there is a vertex that is embedded to a degree $2$ vertex of $G$. Let us denote the set of all these degree $2$ vertices by $R$. 
Note that the other vertex of the matching edge embeds to a vertex with multiple neighbors in $\phi(I)$. Thus, the vertices from $R$ have no neighbors in the image of $I$, by Corollary \ref{onlyshared}. 
Furthermore, $G[R]$ is an empty graph by Lemma~\ref{lem:noedge}, so $G[J_2\cup R]$ is an empty graph also.
 Note that since no vertex of $R$ has neighbors in $\phi(I)$, the removal of edges incident to the vertices in  $\phi(I)$ that had neighbors in $P$, which we performed before defining $J_1, J_2, \dots, J_q$, did not change the neighborhood of any vertex in $R$. Thus because $G$ initially had no degree $2$ twins, all vertices in $J_2\cup R$ have distinct neighborhoods from each other. Furthermore, for each such vertex, its two neighbors lie in the image of the skeleton. 

We wish to find an upper bound on the number of edges between two vertices in the image of the skeleton where neither is in $R$. 
We also wish to find an upper bound on the number of edges incident to $J_2\cup R$. 
For this first quantity, it is the number of edges in an induced subgraph of $G$ with $n_0+\sum a_i+\sum b_i+\sum c_i+\sum d_i+\sum e_i+\sum f_i+\sum l_i+2r_1+r_2+2r_3+2y+3m+3z$ vertices. Since $G$ is planar, this is at most $3(n_0+\sum a_i+\sum b_i+\sum c_i+\sum d_i+\sum e_i+\sum f_i+\sum l_i+2r_1+r_2+2r_3+2y+3m+3z)$ edges. 
Furthermore, since at least $r_1$ of these vertices have degree $1$ in $G$ and they are not adjacent to each other, 
we can improve the upper bound to $3(n_0+\sum a_i+\sum b_i+\sum c_i+\sum d_i+\sum e_i+\sum f_i+\sum l_i+2r_1+r_2+2r_3+2y+3m+3z)-2r_1$.

The number of edges incident to $J_2\cup R$ is $2|J_2\cup R|$. 
To bound $|J_2\cup R|$, consider an auxiliary graph $O_{J_2\cup R}$ where the vertex set is $N_G(J_2\cup R)$. 
For every vertex in $J_2\cup R$ with neighborhood $\{u,v\}$ in $G$, we add an edge between $u$ and $v$ in the graph $O_{J_2\cup R}$.
Since no two vertices in $J_2\cup R$ have the same neighborhood, this is a simple graph, and since $G$ is planar, the graph $O_{J_2\cup R}$ is also planar. Thus, the number of edges is at most $3|N_G(J_2\cup R)|$.
We now prove a structural result to establish an upper bound on $|N_G(J_2\cup R)|$.

\begin{claim}\label{RlikeJ2}
    If $\mathcal{C}$ is a component of the skeleton of $H$ that is isomorphic to a tree, $u$ is a leaf of $\mathcal{C}$, and $v$ is the sole neighbor of $u$ in $\mathcal{C}$, the following scenarios are all impossible:
    \begin{enumerate}[label={(\roman*)},itemindent=1em]
        \item The vertices $\phi(u)$ and $\phi(v)$ both have a neighbor in $R$. \label{itm:pt1}
        \item The vertex $\phi(u)$ has a neighbor in $R$ and the vertex $\phi(v)$ has a neighbor in $J_2$. \label{itm:pt2}
        \item The vertex $\phi(u)$ has a neighbor in $J_2$ and the vertex $\phi(v)$ has a neighbor in $R$.\label{itm:pt3}
    \end{enumerate}
\end{claim}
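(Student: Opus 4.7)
The plan is to show, in each of the three scenarios, that $H$ is not plane-saturated by constructing a modified bijection $\phi':V(H)\to V(G)$ and an edge $e\notin E(H)$ so that $H+e$ is a plane subgraph of $G$ under $\phi'$. In every case $e$ will be taken between two isolated vertices of $H$, both residing in the common face $F_I$ guaranteed by Observation~\ref{obs:all_I_in_one_face}; thus drawing $e$ without crossing is automatic, and the entire task reduces to designing $\phi'$ so that every edge of $H+e$ maps to an edge of $G$.

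The central primitive is an ``$R$-swap.'' If $w\in R$ with $w=\phi(x_1)$ for the M2 matching edge $x_1 x_1'$, the definition of M2 guarantees $\phi(x_1')$ has at least two neighbors in $\phi(I)$; we may therefore select an isolated vertex $\iota_1\in I$ with $\phi(\iota_1)\in N_G(\phi(x_1'))\cap \phi(I)$ and interchange the $\phi$-images of $x_1$ and $\iota_1$, so that the matching edge $x_1 x_1'$ is preserved while $\iota_1$ now carries the image $w$. For $w\in J_2\subseteq\phi(I)$ the vertex $w$ is already $\phi(\iota)$ for some isolated $\iota$, so no preliminary swap is needed.

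In case (i) we first dispose of the possibility that $\mathcal{C}$ is itself a matching edge: the only non-trivial sub-case is $\mathcal{C}$ being M2 with $\phi(v)\in R$, but then $w_2\in R\cap N_G(\phi(v))$ contradicts the independence of $R$ (Lemma~\ref{lem:noedge}). For the remaining (non-matching) case, pick distinct $\iota_1,\iota_2\in I$ serving as $R$-swap partners for $x_1 x_1'$ and $x_2 x_2'$ (possible since each M2-partner has at least two $\phi(I)$-neighbors), and compose the swaps $x_1\leftrightarrow\iota_1$, $x_2\leftrightarrow\iota_2$, $u\leftrightarrow\iota_2$. The last swap preserves $uv$ because $\phi'(u)=w_2\in N_G(\phi(v))$, and the added edge $\iota_1\iota_2$ maps under $\phi'$ to $w_1\phi(u)\in E(G)$. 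Case (ii) follows the same template with $\iota_2:=\phi^{-1}(w_2)$ directly (no $R$-swap needed for $w_2$) and the swaps $x_1\leftrightarrow\iota_1$ and $u\leftrightarrow\iota_2$; the added $\iota_1\iota_2$ again maps to $w_1\phi(u)$. In both cases $u$ is a leaf, so the swap at $u$ only affects the edge $uv$, which is preserved.

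The main obstacle is case (iii), where the analogous swap $v\leftrightarrow\iota_1$ would preserve $uv$ via $w_1\in N_G(\phi(u))$ but would \emph{break} any other edge of $v$ in $\mathcal{C}$, since $w_1$ has only two $G$-neighbors. The resolution relies on Corollary~\ref{onlyshared} to force $N_G(\phi(v))\cap\phi(I)\subseteq\{w_1\}$ and then on a careful analysis of the second $G$-neighbor $\phi(y)$ of $w_1$: preservation of $v$'s other $\mathcal{C}$-edges under $v\leftrightarrow\iota_1$ forces each such neighbor of $v$ to coincide with $y$, so $\deg_{\mathcal{C}}(v)\leq 2$ with the extra neighbor (if any) equal to $y$. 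Any residual configuration is then ruled out by combining the $R$-swap $x_2\leftrightarrow\iota_2$ with the swap $v\leftrightarrow\iota_1$ and adding $\iota_1\iota_2$, or, when the restricted tree structure admits another leaf, by invoking Lemma~\ref{no2separateNeighbors} on that alternate leaf to produce the required contradiction.
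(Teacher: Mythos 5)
Your cases (i) and (ii) are fine and essentially coincide with the paper's argument: re-embed the \emph{leaf} $u$ onto the neighbor of $\phi(v)$ (after an ``$R$-swap'' freeing that vertex when it lies in $R$), so that the only skeleton edge at $u$, namely $uv$, is preserved, and then two isolated vertices land on adjacent vertices of $G$ inside the common face, contradicting saturation. (Two small omissions there: you should say why the two $R$-neighbors $w_1,w_2$ in case (i) are distinct --- each vertex of $R$ has degree $2$ in $G$ with one edge being its matching-edge image, so it cannot be adjacent to both $\phi(u)$ and $\phi(v)$ unless $u$ or $v$ were its M2 partner, which is excluded once $\mathcal C$ is not that M2 edge --- and in case (ii) you need $\iota_1$ chosen with $\phi(\iota_1)\ne w_2$, which is possible since $\phi(x_1')$ has at least two neighbors in $\phi(I)$. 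Also, you only dispose of matching-edge components in case (i); the M2 sub-case needs the same independence remark for (ii) and (iii).)

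The genuine gap is case (iii). There you switch strategy and try to swap the \emph{non-leaf} $v$ onto $w_1\in J_2$, which, as you note, destroys every other skeleton edge at $v$ because $w_1$ has only two neighbors in $G$. The ``resolution'' you sketch does not repair this: the structural conclusion ``$\deg_{\mathcal C}(v)\le 2$ with the extra neighbor mapped to $y$'' is not something you prove about $G$ and $H$, it is merely the condition under which your swap would be legal, so in the remaining configurations you have no valid re-embedding at all; and the final sentence (``any residual configuration is then ruled out by combining the $R$-swap \dots or \dots by invoking Lemma~\ref{no2separateNeighbors} on that alternate leaf'') is an unsubstantiated hope, not an argument --- composing an $R$-swap with an invalid swap at $v$ does not make it valid, and it is unclear what contradiction the alternate leaf would yield. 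The difficulty is self-inflicted: case (iii) is exactly symmetric to your case (ii) if you again move the leaf. Let $x_2$ be the preimage of the $R$-neighbor $w_2$ of $\phi(v)$ and $x_2'$ its M2 partner; perform the $R$-swap $x_2\leftrightarrow\iota_2$ with $\phi(\iota_2)\in N_G(\phi(x_2'))\cap\phi(I)$, $\phi(\iota_2)\ne w_1$, then swap $u\leftrightarrow\iota_2$ so that $u$ sits on $w_2$ (preserving $uv$) and $\iota_2$ sits on $\phi(u)$, which is adjacent to $w_1\in J_2\subseteq\phi(I)$; adding the edge between $\iota_2$ and $\phi^{-1}(w_1)$, both isolated and in the common face, gives the contradiction. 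This is precisely how the paper handles (iii); with that replacement your proof is complete.
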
   

\begin{proof}
A vertex in $R$ is incident to two edges in $G$, one of which is the image of the matching edge. 
Hence if a pair of vertices in $G$, that are not images of the vertices in the matching edges of Type M2, each have a neighbor in $R$, these are distinct neighbors.

 If $\mathcal{C}$ is a matching edge of Type M2, all parts hold automatically from the fact that $J_2\cup R$ is an independent set. 
 
Proof of Claim~\ref{RlikeJ2}~\ref{itm:pt1}: For this scenario to hold, the images of $u$ and $v$ have distinct neighbors in $R$, by the first paragraph of this proof. 
Suppose that the preimages of these distinct neighbors are $w_1$ and $w_2$, respectively. 
The vertices $w_1$ and $w_2$ are vertices of matching edges in $H$ and we denote the other vertices of the matching edges by $x_1$ and $x_2$ respectively. 
Each of the vertices $\phi(x_1)$ and $\phi(x_2)$ has at least two neighbors in $\phi(I)$ so we can choose distinct vertices $q_1, q_2\in I$ such that $\phi(q_i)\phi(x_i)$ is an edge of $G$ for $i=1,2$. 
By Observation~\ref{obs:all_I_in_one_face}, $q_1$ and $q_2$ are embedded in the same face of $H$. 

Let us consider the following embedding, derived from $\phi$ by the following modification: 
\begin{itemize}
    \item Map $u$ to $\phi(w_2)$;
    \item Map $w_2$ to  $\phi(q_2)$;
        \item Map $q_2$ to $\phi(x_1)$;
    \item Map $x_1$ to $\phi(u)$;
\end{itemize}
This is a valid embedding of the tree and of the matching edges $w_1x_1$ and $w_2x_2$, but now $q_1$ and $q_2$ are mapped to adjacent vertices of $G$. 
By Observation~\ref{obs:all_I_in_one_face}, it is possible to add an edge between them in $H$ without introducing a crossing. The resulting graph is still a subgraph of $G$,  contradicting the assumption that $H$ is a plane-saturated subgraph.

Proof of Claim~\ref{RlikeJ2}~\ref{itm:pt2}: Suppose that the vertex $\phi(u)$ has a neighbor in $R$ with preimage $w_1$ and that the vertex $\phi(v)$ has a neighbor in $J_2$ with preimage $w_2$. 
$w_1$ is a vertex of a matching edge in $H$ and we let $x_1$ be the other vertex of the matching edge. 
The vertex $\phi(x_1)$ has a neighbor $\phi(q)\in \phi(I)$ which is distinct from $\phi(w_2)$ since it has at least two neighbors in $\phi(I)$. 

Let us consider the following embedding, generated from $\phi$: 
\begin{itemize}
    \item Map $u$ to $\phi(w_2)$;
        \item Map $w_2$ to $\phi(x_1)$;
    \item Map $x_1$ to  $\phi(u)$;

\end{itemize}
This is a valid embedding of the tree and the matching edge $w_1x_1$, but now $q,w_2\in I$ are mapped to adjacent vertices of $G$.
By Observation~\ref{obs:all_I_in_one_face}, it is possible to add an edge between them in $H$ without introducing a crossing. The resulting graph is still a subgraph of $G$, contradicting the assumption that $H$ is a plane-saturated subgraph.

Proof of Claim~\ref{RlikeJ2}~\ref{itm:pt3}: 
Suppose that the vertex $\phi(u)$ has a neighbor in $J_2$ with preimage $w_1$ and that the vertex $\phi(v)$ has a neighbor in $R$ with preimage $w_2$. 
The vertex $w_2$ is a vertex of a matching edge in $H$ and we let $x_2$ be the other vertex of the matching edge.
The vertex $\phi(x_2)$ has a neighbor $\phi(q)\in \phi(I)$ that is distinct from $\phi(w_1)$, since it has at least two neighbors in $\phi(I)$. 
Let us consider the following embedding, generated from $\phi$: 
\begin{itemize}
    \item Map $u$ to $\phi(w_2)$;
        \item Map $w_2$ to $\phi(q)$;
    \item Map $q$ to  $\phi(u)$;

\end{itemize}
This is a valid embedding of the tree and the matching edge $w_2x_2$, but now $q,w_1\in I$ are mapped to adjacent vertices of $G$. 
By Observation~\ref{obs:all_I_in_one_face}, it is possible to add an edge between them in $H$, contradicting the assumption that $H$ is a plane-saturated subgraph.
\end{proof}


Any component of $H$ isomorphic to a non-star tree has two disjoint pairs of adjacent vertices where the image of at least one has no neighbor in $J_2$ by Corollary \ref{onlyshared}. 
For such a pair of vertices, if the image of exactly one has a neighbor in $J_2$, then by Claim \ref{RlikeJ2} \ref{itm:pt2} and \ref{itm:pt3}, the image of the other has no neighbor in $R$. 
If the image of neither has a neighbor in $J_2$, then by Claim \ref{RlikeJ2} \ref{itm:pt1}, the image of at least one has no neighbor in $R$. 
In all scenarios, the image of at least one vertex from the pair has no neighbors in $J_2\cup R$.
Note that any component of $H$ isomorphic to a star (including the matching edges of Types M1, M3, M4) has one such pair of vertices. 

Thus, the number of vertices of $O_{J_2\cup R}$ is at most
\begin{align}\label{eq:JR}
\begin{split}
n_0+\sum(a_i-2)+\sum(b_i-2)+\sum(c_i-2)&+\sum(d_i-1)+\sum(e_i-1)+\sum(f_i-1)\\
&+\sum (l_i-1)
+r_1+r_2+r_3+y+2m+2z.
\end{split}
\end{align}

Since $O_{J_2\cup R}$ is planar, the number of edges is at most three times the value in \eqref{eq:JR}.
The number of edges of $G$ incident to some vertex of $J_2\cup R$ is at most six times the value in \eqref{eq:JR}.


Recall the upper bounds for the number of remaining edges in $G$, which is the same as in the previous strategy:
\begin{align*}
\begin{split}
&\left(4|B|+8|C|+\sum (d_i+2)+4y+5z\right)\\
&+\left(n_0+\sum (a_i-2)+\sum (b_i-3)+\sum(c_i-4)+|E|+\sum(f_i-1)+m\right)\\&+\left(6\left(n_0+\sum (a_i-2)+\sum (b_i-3)+\sum (c_i-4)+|E|+r_1+r_2+r_3+m\right)+3\sum (f_i+1)\right).
\end{split}
\end{align*}

Adding everything up, we get
\begin{align*}
\begin{split}
    e(G)&<16n_0+\sum(16a_i-26)+\sum(16b_i-29)+\sum(16c_i-32)+\sum(10d_i-4)+\sum(9e_i+1)\\&+\sum(13f_i-4)+\sum(9l_i-6)+16r_1+15r_2+18r_3+16y+28m+26z.
\end{split}
\end{align*}

As before, we will apply Lemma~\ref{lem:compare}. We will compare each term in our lower bound for $e(H)$ to the corresponding term in our upper bound for $e(G)$, with the one exception being that the $r_2$ and $r_3$ terms will be considered together. 
Some of the terms are the same as before, so besides the $r_2$ and $r_3$ term, it suffices to check that $\frac{e_i-1}{9e_i+1}\ge 1/16$ for $e_i\ge 4$, and that $ \frac{2m}{28m}\ge 1/16$, which are both true. The only term in the lower bound for $e(H)$ that is not at least $1/16$ times the corresponding term in the upper bound for $e(G)$ is the $r_3$ term since this would give $\frac{r_3}{18r_3}=1/18$. However, we are grouping the $r_2$ and $r_3$ terms together and it suffices to have
\begin{align*}
    \frac{r_2+r_3}{15r_2+18r_3}&\ge 1/16\\
    r_2&\ge 2r_3.
\end{align*}

We previously established that the desired bound holds when $4r_3\ge r_2$, so this bound of $r_2\ge 2r_3$ holds in all cases where we had not already proven the desired claim. Thus, the proof that $\frac{e(H)}{e(G)}>1/16$ is complete.

\section{Proof of Theorem~\ref{thm:mostK}}\label{section:mostK}
\subsection{Modifying the upper bound construction}

\begin{construction}\label{generalconstruction}
Let $m\geq{9}$ and $G_0$ be a graph on $m+2$ vertices consisting of a $C_m$ and two additional vertices which are each adjacent to every vertex of the $C_m$.
There exists a unique planar embedding of $G_0$. 
Note that $G_0$ has $m+2$ vertices, $3m$ edges and $2m$ faces.

Let $G_1$ be the planar graph obtained from $G_0$ in the following way. 
For each face $f$, we add a vertex $u_f$, of degree $3$ adjacent to the three boundary vertices of $f$. 
For each edge $e$ of $G_0$, we add $k_2$ vertices of degree $2$ which are each adjacent to the two endpoints of $e$. 
For each vertex $v$ of $G_0$, we add $k_1$ vertices of degree $1$ which are each adjacent to $v$. 

Let $G$ be the disjoint union of $G_1$ and  $K_4$.
\end{construction}

Note that $G$ is planar, at most $k_1$ of its degree $1$ vertices have the same neighborhood, and at most $k_2$ of its degree $2$ vertices have the same neighborhood.

\begin{claim}
    Let $G$ be as it is defined in Construction \ref{generalconstruction}. Then,
    $\psr(G)<\frac{1}{9+k_1+6k_2}+\frac{3}{m}$.
\end{claim}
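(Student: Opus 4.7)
The plan is to adapt the proof of Claim~\ref{ubworks}. By direct enumeration, $v(G)=(3+3k_2+k_1)m+2k_1+6$ and $e(G)=(9+k_1+6k_2)m+2k_1+6$, with each cycle vertex of $G_0$ having degree $8+4k_2+k_1$ in $G$, each of the two high-degree vertices of $G_0$ having degree $m(2+k_2)+k_1$, and every other vertex having degree at most $3$. Setting $t:=9+4k_2+k_1$, I let $H_0$ be the disjoint union of $C_m$, a $K_4$, two copies of $K_{1,t}$, and $v(G)-(m+2t+6)$ isolated vertices; this last count is nonnegative precisely because of the hypothesis $m\ge 9$, the binding case being $k_1=k_2=0$. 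I embed $H_0$ by drawing $K_4$ with its four triangular faces and placing $C_m$ in one face, the two stars in another, and the isolated vertices in a third.

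To verify $H_0\subseteq G$, I send the two centers of the $K_{1,t}$'s to the two high-degree vertices of $G_0$, the $C_m$ to the cycle of $G_0$, the $K_4$ to the disjoint $K_4$, and the leaves of each $K_{1,t}$ to any $t$ non-cycle neighbors of the corresponding high-degree vertex. The condition $m\ge 9$ exactly supplies enough non-cycle neighbors in the tight case $k_1=k_2=0$, where each high-degree vertex has only $m$ non-cycle neighbors.

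The heart of the argument is plane-saturation, which mirrors Claim~\ref{ubworks} nearly verbatim. Because only the two high-degree vertices of $G_0$ have degree at least $t$ in $G$, the two centers of the $K_{1,t}$'s in $H$ must map (up to swap) to them. Deleting the incident edges leaves a disjoint $K_4$ together with a graph $G'$ in which each cycle vertex has $k_1$ pendants and each cycle edge $uv$ has $k_2+2$ degree-$2$ vertices with neighborhood $\{u,v\}$. The case analysis of Claim~\ref{ubworks} carries over unchanged: two substitutes on a common cycle edge form only a $C_4$, while any cycle using a single substitute is forced to traverse the entire original cycle and has length $>m$, so $G'$ contains a unique copy of $C_m$. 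Once all of $V(G_0)$ is identified, the $K_4$ of $H$ must be the disjoint $K_4$ of $G$, since every other edge of $G$ is incident to $V(G_0)$. Ruling out addable edges then proceeds as in Claim~\ref{ubworks}: each candidate is either forbidden in $G$ (the $K_{1,t}$ leaves and the isolated vertices are chosen outside $V(G_0)$, and every edge outside the disjoint $K_4$ is incident to $V(G_0)$) or blocked by the $K_4$ boundary in the embedding.

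Finally, plugging in $e(H)=m+6+2t=m+24+8k_2+2k_1$, the desired inequality
\[
\frac{m+24+8k_2+2k_1}{(9+k_1+6k_2)m+2k_1+6}<\frac{1}{9+k_1+6k_2}+\frac{3}{m}
\]
is a routine computation after clearing denominators. The main obstacle I expect is the bookkeeping behind the choice of $t$: it must be large enough to pin down the two high-degree vertices by degree alone ($t>8+4k_2+k_1$), small enough that $t$ non-cycle neighbors exist at each high-degree vertex (forcing $m\ge 9$), and compatible with the edge-count identity above. The uniqueness-of-$C_m$ step, although involving more substitute vertices per edge, is unchanged in substance because additional pendants and parallel degree-$2$ attachments cannot shorten cycles.
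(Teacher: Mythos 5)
Your proposal is correct and follows essentially the same route as the paper: you take the identical subgraph $H_0$ (with stars $K_{1,k_1+4k_2+9}$), the same four-face $K_4$ embedding, the same degree-threshold identification of the two apex vertices, and the same adaptation of the unique-$C_m$ argument from Claim~\ref{ubworks}, ending with the same edge-count computation. The only difference is presentational: you spell out the $k_2+2$ degree-$2$ substitutes per cycle edge and the role of $m\ge 9$, where the paper simply says the saturation argument "follows similarly" to Claim~\ref{ubworks}.
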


The number of vertices of $G$ is
\[
v(G)=(m+2)+2m+k_2(3m)+k_1(m+2)+4=(3k_2+k_1+3)m+(2k_1+6).
\]

The number of edges of $G$ is
\[
e(G)=3m+3(2m)+2k_2(3m)+k_1(m+2)+6=(9+k_1+6k_2)m+(2k_1+6).
\]

Consider a planar graph $H_0$ consisting of the disjoint union of $C_m$, $K_4$, two copies of $K_{1,k_1+4k_2+9}$, and $(2+k_1+3k_2)m-(8k_2+18)$ isolated vertices. The number of vertices of $H_0$ is $v(H_0)=(3k_2+k_1+3)m+(2k_1+6)=v(G)$ and the number of edges is $e(H_0)=m+6+2(k_1+4k_2+9)=m+(2k_1+8k_2+24)$.

It is easy to see that $H_0$ is a subgraph of $G$. Now let $H$ denote the planar embedding of $H_0$ as follows: We first draw the $K_4$ without crossings so it has four triangular faces. Inside one of the faces, we embed $C_m$, inside another we embed two $K_{1,k_1+4k_2+9}$'s, and inside a third, we embed $(2+k_1+3k_2)m-(8k_2+18)$ isolated vertices.

To demonstrate that $H$ is plane-saturated with respect to $G$, we observe that every vertex of $G$ has a degree at most $3$ except the vertices in $V(G_0)$: the $C_m$ and the two high-degree vertices adjacent to all vertices of that $C_m$. A vertex from $V(C_m)$ has degree $k_1+4k_2+8$ in $G$. Indeed, it is adjacent to two other vertices on the $C_m$, the two high-degree vertices, four additional vertices of degree $3$, $4k_2$ additional vertices of degree $2$, and $k_1$ additional vertices of degree $1$. Each of the two high-degree vertices in $G_0$ has degree $(2+k_2)m+k_1\ge k_1+4k_2+9$. As $G$ has only two vertices of degree at least $k_1+4k_2+9$, the two degree $k_1+4k_2+9$ vertices in $H$ can be uniquely identified (up to swapping) in the original graph $G$. The proof that $H$ is plane-saturated for $G$ follows similarly to that of Claim \ref{ubworks}.
We then observe that
\[
\frac{e(H)}{e(G)}=\frac{m+(2k_1+8k_2+24)}{(9+k_1+6k_2)m+(2k_1+6)}<\frac{1}{9+k_1+6k_2}+\frac{3}{m},
\]

and thus approaches $\frac{1}{9+k_1+6k_2}$ as $m\to\infty$.

\subsection{Modifying the lower bound proof}

For a planar $G$ and a plane-saturated subgraph $H$ where each face contains at most one isolated vertex, we note that as per the proof of Lemma~\ref{facewith2}, that $\frac{e(H)}{e(G)}\ge 1/6>\frac{1}{9+k_1+6k_2}$. It follows that we may assume every embedding of $H$ in $G$ is such that the image of $I$ is an independent set.

We now modify the two-phase algorithm from Section \ref{section:twinfree}. In the new Phase~\ref{Phase_one},

\begin{enumerate}
    \item[(a)] Select a matching edge $e$ of the skeleton such that $\phi(e)=ab$, $|N(b)\cap \phi(I)|\ge 2$, there exists a vertex $c\in N(b)\cap \phi(I)$ with $d_G(c)=1$, and $d_G(a)>1$.
    \item[(b)] We may instead embed this matching edge to the edge $bc$ of $G$, placing $a$ in the image of $I$, and not changing anything else. Thus we redefine $\phi(\phi^{-1}(a)):=c$ and $\phi(\phi^{-1}(c)):=a$.
    \item[(c)] We repeat this process until there are no more matching edges of this form. The process is guaranteed to terminate because on each step, the number of vertices in $\phi(I)$ of degree $1$ in $G$ is strictly decreasing.
\end{enumerate}

Phase~\ref{phase_two} will proceed as in Section \ref{section:twinfree}.
The structural properties proved in Section \ref{section:twinfree} still apply and the various types of skeleton components are exactly as in Table \ref{table}, except that we modify the definition slightly for matching edges of Type M1. In this section, a component of $H$ consisting of just the edge $e$ is a Type M1 matching edge if one vertex of $\phi(e)$ has degree $1$ in $G$ and the other has at least two neighbors in $\phi(I)$. Namely, we drop the condition on the degrees of those neighbors. Now, we must simply modify the resulting bounds for $e(G)$.
In particular, changes must be made to the bounds on $|J_1|$ and $|J_2|$ in Subsection \ref{sub:counts}.

Each of the images of the vertices in the components of the skeleton that have cycles, that are isomorphic to non-star trees, or that are stars of Type S2, S3, or S5 can have up to $k_1$ neighbors in $J_1$. Recall at least two vertices of Type $T_1$ trees are mapped to vertices of $G$ with no neighbors in $\phi(I)$. 
Similarly, for Type $T_2$ and Type $T_3$ trees, there are $3$ and $4$ vertices, respectively whose images either have no neighbors in $\phi(I)$ or only neighbors accounted for in \ref{eq:otherI}. Furthermore, the image of a vertex in a matching edge of Type M1 that has neighbors in $\phi(I)$ may have up to $k_1-1$ neighbors in $J_1$ since it has at most $k_1$ degree $1$ neighbors, including the image of the other vertex of this matching edge. 
Thus,
\begin{align}\label{eq:newJ_1}
\begin{split}
|J_1|&\le k_1\big(n_0+\sum (a_i-2)+\sum (b_i-3)+\sum(c_i-4)+|E|\\
&+\sum(f_i-1)+m\big)+(k_1-1)r_1.   
\end{split}
\end{align}

The skeleton vertices whose images can be neighbors of the vertices in $J_2$ are precisely those described in Section \ref{section:twinfree}. However, as it is now possible for $k_2$ degree $2$ vertices to have the same neighborhood in $G$, the auxiliary graph we construct is now a multigraph. The vertices of this multigraph are the vertices adjacent to some element of $J_2$ and for every vertex of $J_2$, we add an edge in the multigraph between its neighbors in $G$. This multigraph has at most $k_2$ times as many edges as a planar graph on the same vertex set, so we get:
\begin{align}\label{eq:newJ_2}
2|J_2|\le 6k_2\left(n_0+\sum (a_i-2)+\sum (b_i-3)+\sum(c_i-4)+|E|+\sum(f_i-1)+r_1+r_2+m\right).
\end{align}

Replacing the contributions of \ref{eq:J_1} and \ref{eq:J_2} with those of \ref{eq:newJ_1} and \ref{eq:newJ_2} in our upper bound for $e(G)$ yields
\begin{align*}
\begin{split}
  e(G)&< (k_1+6k_2+9)n_0+\sum\big((9+k_1+6k_2)a_i-(2k_1+12k_2+12)\big)\\ 
  &+\sum\big((9+k_1+6k_2)b_i-(3k_1+18k_2+14)\big)+\sum\big((9+k_1+6k_2)c_i-(4k_1+24k_2+16)\big)\\
  &+\sum(4d_i+2)+\sum\big(3e_i+(k_1+6k_2+6)\big)+\sum\big((k_1+6k_2+6)f_i-(k_1+6k_2-3)\big)\\
  &+\sum(3l_i)+(k_1+6k_2+9)r_1+(6k_2+11)r_2+12r_3+10y+(k_1+6k_2+15)m+14z.
\end{split}
\end{align*}

Recall that \begin{align*} 
\begin{split}
e(H)&\ge n_0+\left(\sum (a_i-1)+\sum(b_i-1)+\sum(c_i-1)\right)\\
&+\left(\sum(d_i-1)+\sum(e_i-1)+\sum(f_i-1)+\sum(l_i-1)\right)+\left(r_1+r_2+r_3+y\right)+2\left(m+z\right).
\end{split}
\end{align*}

We will use Lemma~\ref{lem:compare} to show that $\frac{e(H)}{e(G)}>\frac{1}{9+k_1+6k_2}$ for many pairs $k_1, k_2$. It suffices to check that the following expressions are all at least $\frac{1}{9+k_1+6k_2}$:
\begin{align*}
&\frac{a_i-1}{(9+k_1+6k_2)a_i-(2k_1+12k_2+12)}, \frac{d_i-1}{4d_i+2}, \frac{e_i-1}{3e_i+(k_1+6k_2+6)},\\& \frac{f_i-1}{(k_1+6k_2+6)f_i-(k_1+6k_2-3)},\frac{r_2}{(6k_2+11)r_2}, \frac{r_3}{12r_3},\frac{2m}{(k_1+6k_2+15)m}.
\end{align*}

As $k_1,k_2$ are non-negative and $a_i,d_i, e_i,f_i$ are always at least $4$, the first four ratios, as well as $\frac{2m}{(k_1+6k_2+15)m}$, are always at least $\frac{1}{9+k_1+6k_2}$. We additionally require:
$k_1\ge 2$ and $k_1+6k_2\ge 3$. Satisfying $k_1\ge 2$ automatically satisfies $k_1+6k_2\ge 3$, unless $k_1=2, k_2=0$. It remains to prove Theorem~\ref{thm:mostK} in the cases where $k_1=1$. 

The case $k_1=1,k_2=1$ was already handled in Section \ref{section:twinfree}, and our general proof proceeds in a similar manner by incorporating $k_1, k_2$ into the quantities considered in Subsection~\ref{sub:proof2}. First, we note that the $r_2$ and $r_3$ terms can be considered together so it suffices to have \[
\frac{r_2+r_3}{(6k_2+11)r_2+12r_3}\ge\frac{1}{9+k_1+6k_2}=\frac{1}{10+6k_2}.
\]
Rearranging, we see that the desired bound is established if $(6k_2-2)r_3\ge r_2$.

We then find an alternative bound for $e(G)$, following Subsection \ref{sub:proof2}. The only change is that since we are now allowed to have up to $k_2$ degree $2$ vertices with the same neighborhood, the upper bound on the number of edges incident to $J_2\cup R$ is now $6k_2$ times the value in \ref{eq:JR}. Incorporating this term, along with our new upper bound on $|J_1|$ gives
\begin{align*}
\begin{split}
    e(G)&<(9+k_1+6k_2)n_0+\sum\big((9+k_1+6k_2)a_i-(12+2k_1+12k_2)\big)+\sum\big((9+k_1+6k_2)b_i\\
    &-(14+3k_1+12k_2)\big)+\sum\big((9+k_1+6k_2)c_i-(16+4k_1+12k_2)\big)\\
    &+\sum\big((4+6k_2)d_i+(2-6k_2)\big)+\sum\big((3+6k_2)e_i+(6+k_1-6k_2)\big)\\
    &+\sum\big((6+k_1+6k_2)f_i+(3-k_1-6k_2)\big)+\sum\big((3+6k_2)l_i-6k_2\big)\\
    &+(9+k_1+6k_2)r_1+(6k_2+9)r_2+(6k_2+12)r_3+(6k_2+10)y\\
    &+(15+k_1+12k_2)m+(12k_2+14)z.
\end{split}
\end{align*}

Using Lemma~\ref{lem:compare}, it suffices to check that the following values are all at least $\frac{1}{9+k_1+6k_2}$:
\begin{align*}
    &\frac{a_i-1}{(9+k_1+6k_2)a_i-(12+2k_1+12k_2)}, \frac{d_i-1}{(4+6k_2)d_i+(2-6k_2)},\frac{e_i-1}{(3+6k_2)e_i+(6+k_1-6k_2)},\\
    &\frac{f_i-1}{(6+k_1+6k_2)f_i+(3-k_1-6k_2)},\frac{r_2+r_3}{(6k_2+9)r_2+(6k_2+12)r_3}, \frac{2m}{(15+k_1+12k_2)m}, \frac{2z}{(12k_2+14)z}.
\end{align*}

All of these desired inequalities follow from $k_1,k_2\ge 0$ and $a_i,d_i,e_i,f_i\ge 4$ except for 
\[
\frac{r_2+r_3}{(6k_2+9)r_2+(6k_2+12)r_3}\ge \frac{1}{9+k_1+6k_2}=\frac{1}{10+6k_2}.
\]

Rearranging, we see that this is true when $r_2\ge 2r_3$. As long as $k_2\ge 1$, one of the inequalities $(6k_2-2)r_3\ge r_2$ and $r_2\ge 2r_3$ holds, so we have proven Theorem~\ref{thm:mostK} in the cases where $k_1=1$.

\section{Conclusion}\label{section:conc}


We have already shown that $\psr(G)$ is bounded away from $0$ for graphs in the class $\mathcal{G}_{k_1,k_2}$ for all pairs of non-negative integers $k_1,k_2$. We conjecture that Theorem~\ref{thm:mostK} can be extended to all such pairs.

\begin{conjecture}\label{conj:allK}
    For non-negative integers $k_1, k_2$, any graph $G\in \mathcal{G}_{k_1,k_2}$ satisfies
     \[
    \psr(G)>\frac{1}{9+k_1+6k_2}.
    \]
\end{conjecture}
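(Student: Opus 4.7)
The plan is to adapt the argument of Section~\ref{section:mostK} so as to cover the remaining parameter ranges, namely $k_1 = 0$ (all $k_2 \ge 0$) and $(k_1, k_2) \in \{(1,0), (2,0)\}$. The upper-bound construction in Section~\ref{section:mostK} already produces, for every non-negative pair $(k_1, k_2)$, planar graphs in $\mathcal{G}_{k_1, k_2}$ with plane-saturation ratio arbitrarily close to $\tfrac{1}{9+k_1+6k_2}$, so only the lower bound is new. All of the structural scaffolding---the two-phase embedding algorithm, Corollaries~\ref{Cor:independetnt} and~\ref{onlyshared}, Claim~\ref{RlikeJ2}, and the component classification in Table~\ref{table}---transfers verbatim; what requires new work is the final application of Lemma~\ref{lem:compare}.

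A direct inspection of the existing ratio-by-ratio analysis identifies the only possible obstructions: the ratios $\tfrac{r_2}{(6k_2+11)r_2}$, $\tfrac{r_3}{12r_3}$, and $\tfrac{y}{10y}$ (together with their Subsection~\ref{sub:proof2} analogues) can fall below $\tfrac{1}{9+k_1+6k_2}$ exactly when $k_1 < 2$, $k_1 + 6k_2 < 3$, or $k_1 + 6k_2 < 1$, respectively. My first move is to harvest the automatic vanishings in the excluded cases: $k_1 = 0$ forces $r_1 = 0$ and $J_1 = \emptyset$ (since $G$ has no degree-$1$ vertex and hence $\phi(I)$ has no degree-$1$ vertex either), while $k_2 = 0$ forces $r_2 = 0$, $J_2 = \emptyset$, and $R = \emptyset$. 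These simplifications immediately remove the $r_2$-obstruction in every $k_2 = 0$ subcase. For $k_1 = 0$ with $k_2 \ge 1$ I would form a convex combination of the Subsection~\ref{sub:proof1} bound (tight when $r_2$ is small relative to $r_3$) and the Subsection~\ref{sub:proof2} bound (tight in the opposite regime), with weight tuned to the observed ratio $r_2 : r_3$, so that neither the $r_2$-term nor the $r_3$-term dominates. A similar observation handles the $y$- and $z$-terms in the $(1,0)$ and $(2,0)$ cases; here the extra removal step preceding inequality~\ref{eq:otherI} becomes unnecessary (there are no degree-$2$ vertices to become twins after removal), which tightens the $y$- and $z$-coefficients enough to pass the $\tfrac{1}{9+k_1+6k_2}$ threshold.

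The hardest case is $(k_1, k_2) = (0, 0)$: here $G$ has minimum degree at least $3$, so $J_1 = J_2 = \emptyset$ and $r_1 = r_2 = 0$, but $r_3$ can remain large while the target is the tight bound $\psr(G) > \tfrac{1}{9}$. The $|J_{\ge 3}|$-contribution of $6r_3$ to the $e(G)$ bound disappears automatically when $k_2 = 0$, so only the $6r_3$ coming from planar edges on the image of the skeleton survives; bringing this down to at most $3r_3$ would close the argument. I would attempt this through a sharper Euler-formula or discharging count on the induced planar subgraph of $G$ on the union of the images of the Type M3 matching edges together with their adjacent vertices in $\phi(I)$, using that each Type M3 matching edge commits one of its vertices to at least three pairwise distinct incident edges (the matching edge itself plus two edges into the independent set $\phi(I)$). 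Making this quantitative, and interleaving it cleanly with the remaining $y$- and $z$-terms through Lemma~\ref{lem:compare}, is the technically delicate step I expect to be the main obstacle.
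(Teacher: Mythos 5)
The statement you are proving is precisely Conjecture~\ref{conj:allK}, which the paper explicitly leaves open: Theorem~\ref{thm:mostK} establishes the bound only for $k_1\ge 1$ with $(k_1,k_2)\ne(1,0),(2,0)$, and the remaining cases ($k_1=0$ for all $k_2$, and $(1,0)$, $(2,0)$) are stated as a conjecture. So there is no proof in the paper to fall back on, and your proposal has to stand on its own as a complete argument for exactly those missing cases. It does not: it is a plan whose decisive steps are either unexecuted (you yourself flag the $(0,0)$ sharpening as the ``main obstacle'') or rest on incorrect claims, as detailed below.

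Concretely: (i) For $(1,0)$ and $(2,0)$, setting $k_2=0$ does kill $r_2$, $J_2$ and $R$, but the binding obstruction in these cases is the $r_3$ term, not the $y$- or $z$-terms: in both of the paper's bounds the Type M3 coefficient is $12$ (the grouped condition with $r_2=0$ reads $\tfrac{r_3}{12r_3}\ge\tfrac{1}{9+k_1}$, which needs $k_1\ge 3$), whereas $\tfrac{y}{10y}$ and $\tfrac{2z}{14z}$ already clear the thresholds $1/10$ and $1/11$; so ``tightening the $y$- and $z$-coefficients'' addresses a non-problem and leaves the actual one untouched. (ii) For $k_1=0$, $k_2\ge 1$, the two regimes of Subsections~\ref{sub:proof1} and~\ref{sub:proof2} no longer cover all graphs: the grouped condition for the second bound becomes $(9+6k_2)(r_2+r_3)\ge(6k_2+9)r_2+(6k_2+12)r_3$, i.e.\ $0\ge 3r_3$, so it only applies when $r_3=0$, and a convex combination with weight $\lambda$ requires $\lambda(6k_2r_3-2r_2)\ge 3r_3$, which is infeasible whenever $r_2\ge 3k_2r_3$ and $r_3>0$. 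This is exactly the place where the paper's argument uses $k_1\ge 1$, so your tuning idea does not close the gap. (iii) For $(0,0)$, your premise that the $6r_3$ contribution from the $\sum_{i\ge 3}i|J_i|$ estimate in~\ref{eq:J_3} ``disappears automatically when $k_2=0$'' is false: that term counts skeleton vertices (one per Type M3 edge) available as neighbors of degree-at-least-$3$ vertices of $\phi(I)$, and has nothing to do with the absence of degree-$2$ vertices; only $J_2$ and $R$ vanish. The proposed discharging refinement that would reduce the remaining $r_3$ coefficient from $12$ to at most $9$ is not carried out, so the hardest case remains entirely open. In short, the proposal does not prove the conjecture, and the statement should still be regarded as open.
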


This is the best possible bound due to Construction \ref{generalconstruction}.
Of greatest interest is the case $k_1=k_2=0$, where Conjecture \ref{conj:allK} would give the following.

\begin{conjecture}
    For a planar graph $G$ with minimum degree at least $3$,
    \[
    \psr(G)>1/9.
    \]
\end{conjecture}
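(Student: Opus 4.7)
The plan is to specialize the machinery of Theorem~\ref{thm:mostK} to the case $k_1 = k_2 = 0$ and then close the remaining deficit that appears at matching-edge components. Let $H$ be a plane-saturated subgraph of $G$ and decompose its skeleton as in Section~\ref{section:twinfree}. Since $G$ has minimum degree at least $3$, no vertex of $G$ has degree $1$ or $2$; in particular no matching edge of $H$ can be of Type M1 or Type M2, so $r_1 = r_2 = 0$. Moreover, every vertex of $\phi(I)$ has degree at least $3$ in $G$, and after the edge-removal procedure of Subsection~\ref{sub:counts} any degree-$1$ or degree-$2$ vertex remaining in $\phi(I)$ would have had no $P$-neighbor to begin with, so its degree in $G$ would have been unchanged; hence $J_1 = J_2 = \emptyset$.

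Substituting these simplifications into the bounds of Section~\ref{section:mostK} yields
\[
e(G) < 9 n_0 + \sum (9a_i - 12) + \sum (9b_i - 14) + \sum (9c_i - 16) + \sum (4d_i + 2) + \sum (3e_i + 6) + \sum (6f_i + 3) + \sum 3 l_i + 12 r_3 + 10 y + 15 m + 14 z.
\]
Applying Lemma~\ref{lem:compare} term by term against the lower bound \eqref{eq:lowerbound_skeleton_edges} for $e(H)$, and using that $a_i, b_i, c_i, d_i, e_i, f_i, l_i \ge 4$, every ratio is at least $1/9$ with exactly two exceptions: the Type M3 term gives $r_3 / (12 r_3) = 1/12$ and the Type M4 term gives $y / (10 y) = 1/10$. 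Consequently, in the absence of matching edges the strict inequality $e(H) > e(G)/9$ is immediate, and the whole problem is reduced to a better accounting of Type M3 and Type M4 contributions.

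The main obstacle is absorbing the deficits from $r_3$ and $y$. I would address this by producing an alternative upper bound on $e(G)$ that is tight on matching-edge contributions, in the spirit of the case split used in Subsection~\ref{sub:proof2}. For a Type M3 matching edge with image $ab$, the vertex $\phi(a)$ has at least two neighbors in $\phi(I)$ of degree $\ge 3$; for Type M4, both $\phi(a)$ and $\phi(b)$ have degree $\ge 3$ in $G$ with at most one neighbor in $\phi(I)$ each. Using saturation together with swap arguments in the style of Claim~\ref{RlikeJ2} and Lemma~\ref{no2separateNeighbors}, one shows that many configurations of extra edges incident to these matching-edge images are forbidden. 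The aim is to replace the coefficients $12 r_3$ and $10 y$ by values at most $9 r_3$ and $9 y$ in the alternative bound, at the cost of mildly weakening some of the other coefficients in a controlled way; a convex combination or a case split between the two bounds would then deliver the uniform ratio $1/9$.

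The hardest part, I expect, is the design of the swap arguments when both endpoints of a matching edge carry degree $\ge 3$. Unlike the Type M1/M2 situation of Section~\ref{section:twinfree}, where a low-degree endpoint provides a convenient swap target, here both endpoints sit in a rich edge environment, and extracting a plane-saturation witness requires carefully reshuffling positions between the matching endpoints and carefully chosen vertices of $\phi(I)$ sharing a common face. Verifying that the refined bookkeeping of edges "charged" to matching components produces a coefficient strictly less than the needed threshold in every case, without inflating any other coefficient beyond $9$ times the corresponding $e(H)$-term, is where I expect the bulk of the technical work to sit.
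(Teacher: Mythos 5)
What you have written is a plan, not a proof, and the statement you are targeting is precisely the open case that the paper does not resolve: it appears only as a conjecture (the $k_1=k_2=0$ instance of Conjecture~\ref{conj:allK}), and the lower-bound part of Theorem~\ref{thm:mostK} explicitly excludes $k_1=0$. Your specialization steps are correct as far as they go: minimum degree $3$ does force $r_1=r_2=0$ and $J_1=J_2=\emptyset$, the resulting upper bound on $e(G)$ is the $k_1=k_2=0$ instance of the paper's general bound, and the term-by-term application of Lemma~\ref{lem:compare} indeed leaves exactly the Type M3 and Type M4 deficits $r_3/(12r_3)=1/12$ and $y/(10y)=1/10$. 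But the entire content of the conjecture lies in closing those two deficits, and at that point your argument stops: you assert that swap arguments "in the spirit of" Claim~\ref{RlikeJ2} should let you replace $12r_3$ and $10y$ by at most $9r_3$ and $9y$, without exhibiting any such argument or even a candidate configuration that saturation forbids.

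There are concrete reasons to doubt that the intended transfer works as stated. The alternative bound of Subsection~\ref{sub:proof2}, and Claim~\ref{RlikeJ2} itself, exploit the degree-$2$ endpoint of a Type M2 edge (the set $R$) and trade the $r_2$-term against the $r_3$-term; under minimum degree $3$ there is no $R$, no $r_2$, and hence no second bound to play off against the first, so the paper's case split $4r_3\ge r_2$ versus $r_2\ge 2r_3$ has no analogue. Moreover the $y$-deficit is new: in every case the paper actually proves, $k_1+6k_2\ge 1$ makes $y/(10y)$ harmless, whereas here you must genuinely improve the charging of the four edges counted in \eqref{eq:otherI} and the six skeleton-induced edges per M4 edge. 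Finally, Construction~\ref{generalconstruction} with $k_1=k_2=0$ shows the bound $1/9$ is asymptotically tight, so your proposal to "mildly weaken some of the other coefficients" has essentially no slack to spend; several ratios (for instance the $f_i$-term) are already exactly $1/9$. Until you produce the forbidden-configuration lemmas for M3 and M4 edges and verify the resulting bookkeeping, the conjecture remains unproved.
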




We can also impose further conditions on the plane-saturated subgraph. 

\begin{question}
    What is the smallest possible value of $\frac{e(H)}{e(G)}$ where $G$ is planar and $H$ is a plane-saturated subgraph of $G$ with no isolated vertices?
\end{question}

As $e(G)<3v(G)$ and the minimal degree condition on $H$ gives $e(H)\geq{\frac{v(H)}{2}}=\frac{v(G)}{2}$, we always have $\frac{e(H)}{e(G)}>\frac{1}{6}$. Our best-known construction in this setting comes from Example \ref{ex:onefifth} which gets arbitrarily close to $\frac{1}{5}$.

While this topic has not been studied before, we see potential for its further development. Besides the direction considered in our work, it may be interesting to compute $\psr(G)$ for various subclasses of planar graphs. It is also possible to extend our notion of plane-saturated to other genus surfaces or to study this problem for different classes of graphs, such as $k$-planar graphs.

Additionally, we wish to suggest some questions that motivated our study. 
This question arises from a two-player game played on a plane where a planar graph $G$ is fixed and $v(G)$ unlabeled vertices are embedded in the plane. Players take turns adding edges to the existing drawing while maintaining the drawing's planarity and at each point of the game, the graph drawn on the plane must be a subgraph of the fixed planar graph $G$. The game ends when a player is unable to make a move, i.e. when the drawing is a plane-saturated subgraph of $G$.
This setup leads to several questions of interest:
\begin{itemize}
    \item Suppose that Player 1 aims to minimize the number of edges in the final drawing $H$, while Player 2 aims to maximize the number of edges in $H$. Assuming optimal play for both players, we can define the \textit{game plane-saturation ratio} of $G$ as $\frac{e(H)}{e(G)}$ and ask which graphs $G$ minimize or maximize this quantity. 

    \item If the objective is instead to be the last player to add an edge, for which graphs $G$ does Player 1 have a winning strategy?
\end{itemize}

We hope these games will provide interesting challenges as well as recreation.

\section*{Acknowledgements}
The authors would like to thank Ben Lund for helpful conversations. AC was supported by the Institute for Basic Science IBS-R029-C1.
NS was supported by the Hungarian National Research, Development and Innovation Office NKFIH grant K132696.  
\bibliographystyle{alpha}
\bibliography{Proposal.bib}
\end{document}